\DeclareMathAlphabet{\mathpzc}{OT1}{pzc}{m}{it}
\newtheorem{theorem}{Theorem}[section]
\newtheorem*{claim*}{Claim}
\newtheorem{lemma}[theorem]{Lemma}
\newtheorem{lem}[theorem]{Lemma}
\newtheorem{Q}[theorem]{Question}
\theoremstyle{definition}
\newtheorem{definition}[theorem]{Definition}\newtheorem{Def}[theorem]{Definition}
\theoremstyle{remark}
\newtheorem{remark}[theorem]{Remark}
\newtheorem{Rmk}[theorem]{Remark}
\numberwithin{equation}{section}
\newcommand{\op}{\operatorname}
\newcommand{\be}{\begin{equation}}
\newcommand{\ee}{\end{equation}}
\newcommand{\Ga}{\Gamma}
\newcommand{\bc}{\mathbb C}
\renewcommand{\c}{\bc}
\newcommand{\R}{\mathbb R}
\renewcommand{\H}{\mathbb H}
\newcommand{\N}{\mathbb N}
\newcommand{\ga}{\gamma}
\newcommand{\La}{\Lambda}
\newcommand{\inte}{\op{int}}
\newcommand{\ba}{\backslash}
\newcommand{\cal}{\mathcal}
\newcommand{\br}{\mathbb R}
\newcommand{\SO}{\op{SO}}
\newcommand{\Isom}{\op{Isom}}
\newcommand{\F}{\cal F}
\newcommand{\bH}{\mathbb H}
\newcommand{\Stab}{\op{Stab}}
\newcommand{\G}{\Gamma}
\newcommand{\BMS}{\op{BMS}}
\renewcommand{\L}{\mathcal L}
\newcommand{\fa}{\mathfrak a}
\renewcommand{\S}{\mathbb S}
\newcommand{\so}{\SO^\circ}
\newcommand{\Gr}{\Gamma_\rho}
\newcommand{\C}{\cal C}
\newcommand{\id}{\op{id}}
\newcommand{\Mob}{\op{M\ddot{o}b}}
\newcommand{\E}{\cal E}
\newcommand{\fg}{\mathfrak g}
\newcommand{\fp}{\mathfrak p}
\newcommand{\fk}{\mathfrak k}
\newcommand{\D}{\Delta}
\newcommand{\yD}{\Upsilon_\D}
\begin{document}

\title[Rigidity of Kleinian groups via self-joinings]{Rigidity of Kleinian groups via self-joinings: measure theoretic criterion}

\author{Dongryul M. Kim}
\address{Department of Mathematics, Yale University, New Haven, CT 06511, USA}
\email{dongryul.kim@yale.edu}

\author{Hee Oh}
\address{Department of Mathematics, Yale University, New Haven, CT 06511, USA}
\email{hee.oh@yale.edu}
\thanks{
 Oh is partially supported by the NSF grant No. DMS-1900101 and 2450703.}
 
\begin{abstract} Let $n, m\ge 2$. Let $\Gamma<\text{SO}^\circ(n+1,1)$ be a Zariski dense convex cocompact subgroup and $\La\subset\S^n$ be its limit set.
Let $\rho : \Gamma \to \text{SO}^\circ(m+1,1)$ be a Zariski dense convex cocompact faithful representation and $f:\Lambda\to \mathbb{S}^{m}$ the $\rho$-boundary map. Let $$\Lambda_f:= \bigcup \left\{ C \cap \Lambda : \begin{matrix}
C \subset \mathbb{S}^n \text{ is a circle such that} \\
f(C \cap \Lambda) \text{ is contained in a proper sphere } \text{in } \mathbb{S}^m
\end{matrix}
\right\}.$$
When there exists at least one
$\La$-doubly stable circle in $\S^n$ (e.g., $\Omega=\mathbb{S}^n-\Lambda$ is disconnected), we prove the following dichotomy: $$\text{either}\quad \Lambda_f= \Lambda \quad \text{ or } \quad \cal H^{\delta}(\Lambda_f) =0,$$ where ${\cal H}^\delta$ is the Hausdorff measure of dimension $\delta=\dim_H \Lambda$. Moreover, in the former case, we have $n=m$ and $\rho$ is a conjugation by a M\"obius transformation on $\mathbb{S}^n$.  Our proof uses ergodic theory for directional diagonal flows and conformal measure theory of discrete subgroups of higher rank semisimple Lie groups, applied to the self-joining subgroup $\Gamma_\rho=(\operatorname{id} \times \rho)(\Gamma) < \text{SO}^\circ(n+1,1)\times  \text{SO}^\circ(m+1,1)$.
We also obtain an analogous theorem for any divergence-type subgroup.
\end{abstract}

\maketitle
\section{Introduction}
Let $\bH^{n+1}$ denote the $(n+1)$-dimensional real hyperbolic space for $n\ge 2$.
The group of its orientation-preserving isometries is given by the identity component
$\so(n+1,1)$ of the special orthogonal group. 
A discrete subgroup $\G<\so(n+1,1)$ is called {\it convex cocompact}
if the convex core\footnote{The convex core of $\Ga\ba \bH^{n+1}$
is the smallest convex submanifold containing all closed geodesics.} 
of the associated hyperbolic manifold $\Ga\ba \bH^{n+1}$ is compact.
Let $\G<\so(n+1,1)$ be a  Zariski dense convex cocompact subgroup for $n\ge 2$, and
$$\rho:\Ga\to \so (m+1,1)$$
be a faithful representation such that $\rho(\Ga) $ is a  Zariski dense convex cocompact cocompact subgroup of
$\so(m+1,1)$ where $m\ge 2$.
For simplicity, we will call a discrete faithful representation $\rho:\Ga\to \so(m+1,1)$  a deformation of $\Ga$ into $\so(m+1,1)$ and a (resp. Zariski dense) convex cocompact deformation of $\Ga$ if the image of $\rho$ is  a (resp. Zariski dense)  convex cocompact subgroup. 
If $\G<\so(n+1,1)$ is cocompact  and $n = m$, the Mostow strong rigidity theorem \cite{Mostowbook} says that $\rho$ is always algebraic, more precisely, it is given by a conjugation by a M\"obius transformation on $\S^n$. However in other cases, Marden's isomorphism theorem and the Teichm\"uller theory imply
that there exists a continuous family of convex cocompact deformations, modulo the conjugations by M\"obius transformation on $\S^m$ (cf. \cite[Section 5]{Matsuzaki1998hyperbolic}).

Let $\La\subset \S^n$ denote
the limit set of $\G$, which is the set of all accumulation points of $\Ga(o)$ in $\S^n$, $o\in \bH^{n+1}$.
 Let $\cal H^{\delta}$ be the $\delta$-dimensional Hausdorff measure on $\S^n$, 
where $\delta$ is the Hausdorff dimension of $\La$ with respect to the spherical metric on $\S^n$. 
Sullivan \cite[Theorem 7]{Sullivan1979density} showed that for $\Ga$ convex cocompact,
we have $$0<\cal H^{\delta}(\La)<\infty .$$

The main aim of this paper is to present a criterion on when $\rho$ is algebraic,
in terms of the Hausdorff measure of the union of all circular slices of $\La$ that are mapped into circles, or more generally into some proper spheres in $\S^m$ by the $\rho$-boundary map.
More precisely, by Tukia \cite{Tukia1985isomorphisms}, there is a unique $\rho$-equivariant continuous embedding  $$f:\La\to \S^m,$$
called the $\rho$-boundary map.
 We consider all circular slices of $\La$ which are
 mapped into some proper spheres in $\S^m$ by $f$:
$$
\Lambda_f:= \bigcup \left\{ C \cap \La : \begin{matrix}
C \subset \S^n \mbox{ is a circle such that} \\
f(C \cap \La) \mbox{ is contained in a proper sphere } \text{in $\S^m$}
\end{matrix}
\right\}.
$$
\vspace{-1.5em}
 \begin{figure}[h]
  \includegraphics [height=3.5cm]{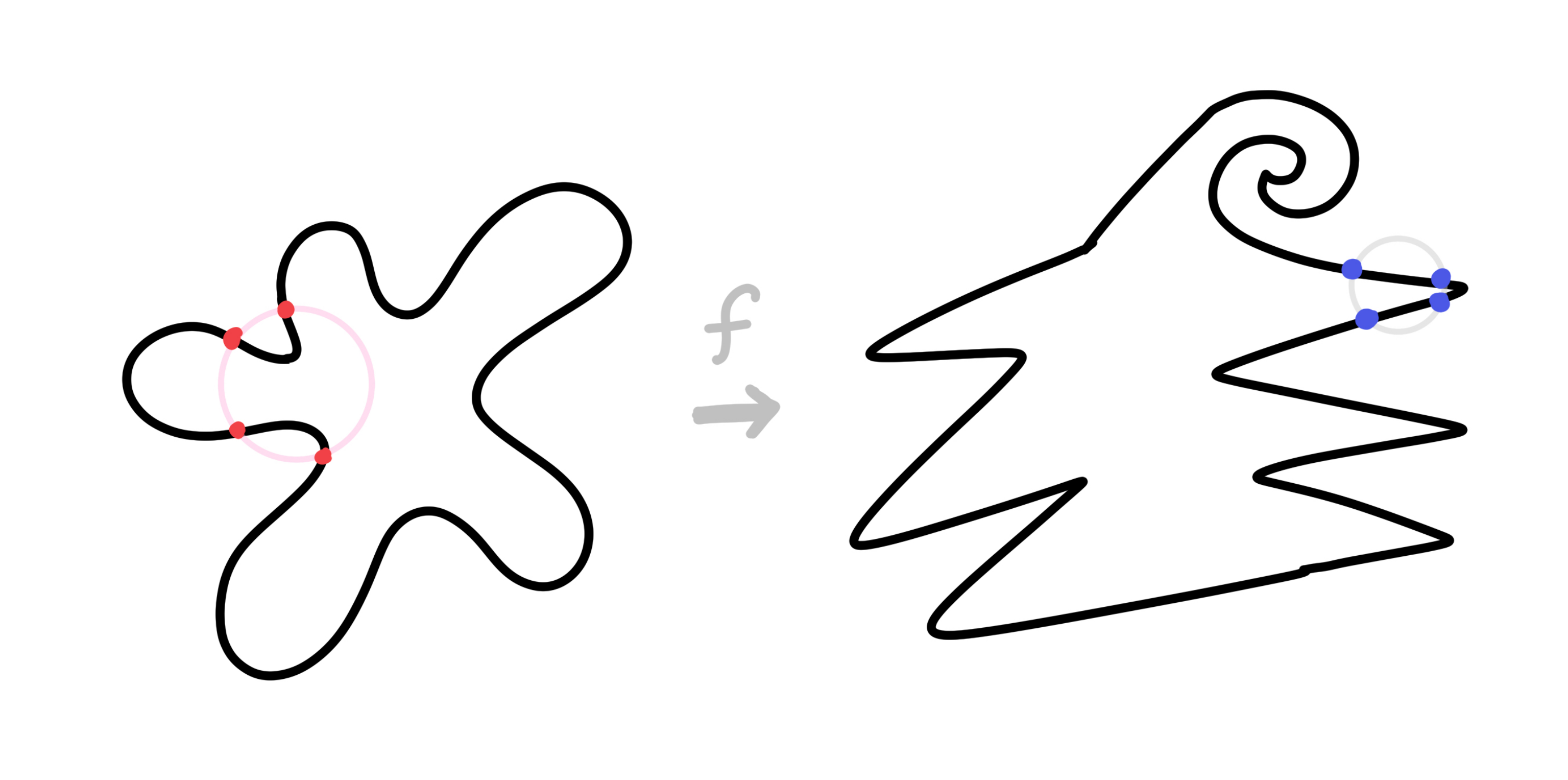}
\caption{$f(C\cap \La)$ is contained in a circle} 
\label{fig.slice}
\end{figure}

We emphasize that the boundary map $f$ is defined only on $\La$ and therefore
our definition of $\La_f$ involves the image of the intersection $C\cap \Lambda$ under $f$, but not the whole circle $C$ (see Figure \ref{fig.slice}).
If $n=m$ and $f$ is a M\"obius transformation of $\S^n$,  then
 $f$ clearly maps all circles to circles and hence $\La_f=\La$.
 The following main theorem of this paper says that in all other cases,
$\La_f$ has  zero $\cal H^{\delta}$-measure. In other words, if $\cal H^{\delta}(\La_f)>0$,
then $f$ is the restriction of a M\"obius transformation of $\S^n$ and $\rho$ is algebraic.
\begin{theorem} \label{high} Let $n, m\ge 2$. Let $\G<\so(n+1,1)$ be a  Zariski dense convex cocompact subgroup
such that the ordinary set $\Omega=\S^n-\Lambda$ has at least two components.
Let $\rho:\Ga\to \so (m+1,1)$
be a Zariski dense convex cocompact deformation and $f:\La\to \S^m$ the $\rho$-boundary map.
 Then
$$\text{either}\quad  \La_f=
\La \quad \text{ or } \quad \cal H^{\delta}(\La_f) =0.$$

In the former case, we have
 $n=m$, $f$ extends to some $g\in \Mob(\S^n)$ and $\rho$ is a conjugation by $g$.
 \end{theorem}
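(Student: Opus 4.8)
The plan is to transfer the problem to the self-joining subgroup $\Gr = (\id\times\rho)(\Ga) < G:= \so(n+1,1)\times\so(m+1,1)$, which is a discrete Zariski dense subgroup of a rank-two semisimple group, and to analyze the set $\La_f$ via the dynamics of the directional diagonal flow on $\Gr\ba G$. The limit set $\La_{\Gr}\subset \S^n\times\S^m$ is the graph of $f$, so a circular slice $C\cap\La$ with $f(C\cap\La)$ inside a proper sphere $S\subset\S^m$ corresponds precisely to a ``sub-sphere pair'' $(C,S)$ meeting $\La_{\Gr}$ in a set that projects onto $C\cap\La$. First I would set up the correspondence between circles $C\subset\S^n$ (together with the relevant proper spheres of $\S^m$) and certain rank-one totally geodesic subspaces of $\bH^{n+1}\times\bH^{m+1}$, so that $\La_f$ becomes the union of limit sets of the stabilizers in $\Gr$ of these subspaces; the hypothesis that $\Omega$ is disconnected (equivalently, $\La$ being doubly stable) is exactly what makes these slices plentiful and well-behaved.

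Next I would invoke the conformal measure theory for $\Gr$: because $\Ga$ is convex cocompact, the Patterson--Sullivan measure on $\La$ is, up to bounded multiplicative constants, comparable to $\cal H^\delta|_\La$ (by Sullivan's theorem, quoted in the excerpt), and it pushes forward under $(\id\times\rho)$ to a $(\Gr,\psi)$-conformal measure for an appropriate linear functional $\psi$ on the Cartan subspace. The key dichotomy then comes from an ergodicity/unique-ergodicity statement: the directional flow corresponding to the ``diagonal'' direction determined by $f$ acts ergodically with respect to the associated Bowen--Margulis--Sullivan measure on $\Gr\ba G$ (here convex cocompactness and Zariski density are used). Since $\La_f$, pulled back to $\Gr\ba G$, is a flow-invariant set that is a countable union of lower-dimensional ``sub-manifold'' pieces (the sub-sphere slices), ergodicity forces it to be either null or co-null. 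Translating back through the comparison with Hausdorff measure yields: either $\cal H^\delta(\La_f)=0$ or $\cal H^\delta(\La\setminus\La_f)=0$.

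In the co-null case I would upgrade ``$\cal H^\delta$-almost all of $\La$ lies on slices mapped into proper spheres'' to ``$\La_f=\La$'' and then to algebraicity. The idea is: if $\cal H^\delta$-a.e. point of $\La$ lies on some circle whose $\La$-intersection is mapped by $f$ into a proper sphere, then a Lebesgue-density / closing-lemma argument (using the recurrence of the diagonal flow and the fact that $\La$ is perfect and doubly stable) produces, through every point of $\La$, a whole circle's worth of such structure, and the rigidity of cross-ratios forces $f$ to preserve circles on a dense set, hence everywhere on $\La$. By Tukia's rigidity (or a direct argument that a circle-preserving boundary map between limit sets extends to a Möbius map), $f$ is the restriction of some $g\in\Mob(\S^m)$; equivariance of $f$ then gives $\rho(\ga) = g\ga g^{-1}$ for all $\ga\in\Ga$, which in turn forces the two hyperbolic spaces to have the same dimension, i.e. $n=m$, and $\La_f=g(\text{all circles})\cap\La = \La$.

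The main obstacle I anticipate is the ergodicity input in the middle step: one needs the right notion of ``directional'' diagonal flow on $\Gr\ba G$ — indexed by the linear form dictated by the Hausdorff dimension data of $\Ga$ and $\rho(\Ga)$ — together with ergodicity of the corresponding higher-rank BMS-type measure, and one must check that $\La_f$ really does pull back to a genuinely flow-invariant, countably-$(k{-}1)$-rectifiable (or otherwise ``thin'') subset so that the zero-one law applies. Establishing ergodicity of this particular directional flow for a Zariski dense convex cocompact self-joining, and controlling the measure-theoretic structure of $\La_f$ precisely enough to feed into it, is where the real work lies; the rigidity conclusion at the end is then comparatively soft, relying on classical facts about Möbius maps and boundary extensions.
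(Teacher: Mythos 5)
Your setup is right---the self-joining $\Gr<G$, the graph structure of $\La_{\Gr}$, the pushforward of $\cal H^\delta|_\La$ as a $\Gr$-conformal measure, and the ergodicity of a directional diagonal flow are all the correct ingredients---but the core mechanism of your argument diverges from what actually works, and the divergence hides a genuine gap. You propose to treat the pullback of $\La_f$ to $\Gr\ba G$ as a flow-invariant set and apply a zero-one law, obtaining ``$\La_f$ is $\cal H^\delta$-null or conull,'' and then to upgrade from conull to $\La_f=\La$ and to algebraicity of $\rho$ by a ``Lebesgue-density / closing-lemma'' argument plus cross-ratio rigidity. That upgrade is precisely where the content of the theorem lies, and your sketch does not supply it: knowing that almost every $\xi\in\La$ lies on \emph{some} circle $C$ with $f(C\cap\La)$ inside a proper sphere does not tell you that $f$ maps circles to circles (each point could lie on a different, unrelated slice), and no cross-ratio argument is available until you already know $f$ preserves enough circles through enough configurations of points. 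Also, your identification of the slices with ``limit sets of stabilizers in $\Gr$ of totally geodesic subspaces'' is off: the stabilizer in $\Ga$ of a given circle is typically trivial, and $C\cap\La$ is not a limit set of anything.

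The paper's route is different and avoids the zero-one law entirely. One assumes $\cal H^\delta(\La_f)>0$ and, toward a contradiction, that $\Gr$ is Zariski dense. The ergodicity of the directional flow $A_{u_\rho}$ for the BMS measure built from the graph-conformal measure, combined with a minimality theorem of Guivarc'h--Raugi on the frame bundle over the limit set, yields a $\cal H^\delta$-conull set $\La'\subset\La$ such that \emph{every} circle-sphere pair $Y\in\Upsilon_\rho$ through a point of $(\id\times f)(\La')$ has dense $\Gr$-orbit in the space $\Upsilon_\rho$ of all pairs meeting $\La_\rho$ (Theorem \ref{ett}). The decisive second ingredient---absent from your proposal---is the obstruction theorem (Theorem \ref{thm.wKM}): if $f(C_0\cap\La)\subset S_0$, then $\Gr(C_0,S_0)$ can \emph{never} be dense in $\Upsilon_\rho$, because one can choose a codimension-one sphere $S$ tangent to $\La_{\rho(\Ga)}$ at a single point while double stability forces every circle near the corresponding limit point to capture at least two limit points; injectivity of $f$ then gives a contradiction. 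Since $\cal H^\delta(\La_f)>0$ produces a point of $\La_f\cap\La'$ and hence a pair $Y_0$ that must be both dense and non-dense, $\Gr$ cannot be Zariski dense; Lemma \ref{Zdense} then gives $n=m$ and that $\rho$ is conjugation by some $g\in\Mob(\S^n)$, whence $f=g|_\La$ and $\La_f=\La$. Note in particular that double stability is used as an obstruction to dense orbits in $\Upsilon_\rho$, not (as you suggest) to make the slices ``plentiful,'' and that disconnectedness of $\Omega$ implies but is not equivalent to double stability.
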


When $n=m=2$, the topological version of the above theorem that either $\La_f = \La$ or $\La_f$ has empty interior was obtained in our earlier paper \cite{kim2022rigidity} for all finitely generated discrete subgroups. Theorem \ref{high} provides its measure theoretic version. See Theorem \ref{thm.topological} for the topological version for general $n, m\ge 2$.
 
\begin{Rmk}
 If $\Gamma<\so(3,1)$ is convex cocompact with $\La$ connected, then $\Omega$ is disconnected \cite[Chapter IX]{Maskit1988Kleinian}; hence Theorem \ref{high} applies. \end{Rmk}

Indeed, we prove Theorem \ref{high} under a weaker condition that
 there exists a $\La$-doubly stable circle (Theorem \ref{high2}). 
 \begin{definition}
We say that a circle $C\subset \S^n$ is $\La$-{\it doubly stable} if 
for any sequence of circles $C_k$ converging to $C$, 
$$\#\limsup (C_k\cap \La) \ge 2,$$
where $\limsup E_k$ is defined as $\bigcap_{n \in \N} \overline{\bigcup_{k \ge n} E_k} $ for a sequence $E_k\subset \S^n$.
\end{definition}
If $\Omega$ is disconnected,
there exists a $\La$-doubly stable circle (Lemma \ref{lem.doubly}). If $\Omega=\emptyset$, i.e., $\La=\S^n$,
then every circle is $\S^n$-doubly stable.
In particular, Theorem \ref{high2} applies to  any uniform lattice $\Ga$ of $\so(n+1, 1)$:  either $f : \S^n \to \S^m$ preserves Lebesgue-almost none of the circles, or $n = m$ and $f$ is induced by a M\"obius transformation on $\S^n$.
\begin{remark}
It is an interesting question whether there exists a Zariski dense convex cocompact subgroup of $\SO^\circ (3,1)$ whose limit set $\La$ is totally disconnected and  there is no $\La$-doubly stable circle.
\end{remark}

In terms of the quasiconformal deformation indicated in Figure \ref{fig.QC},
our theorem implies that the union of circular slices of the left limit set which are mapped into circles has zero $\cal H^\delta$-measure.

 \begin{figure}[h]
  \includegraphics [height=4cm]{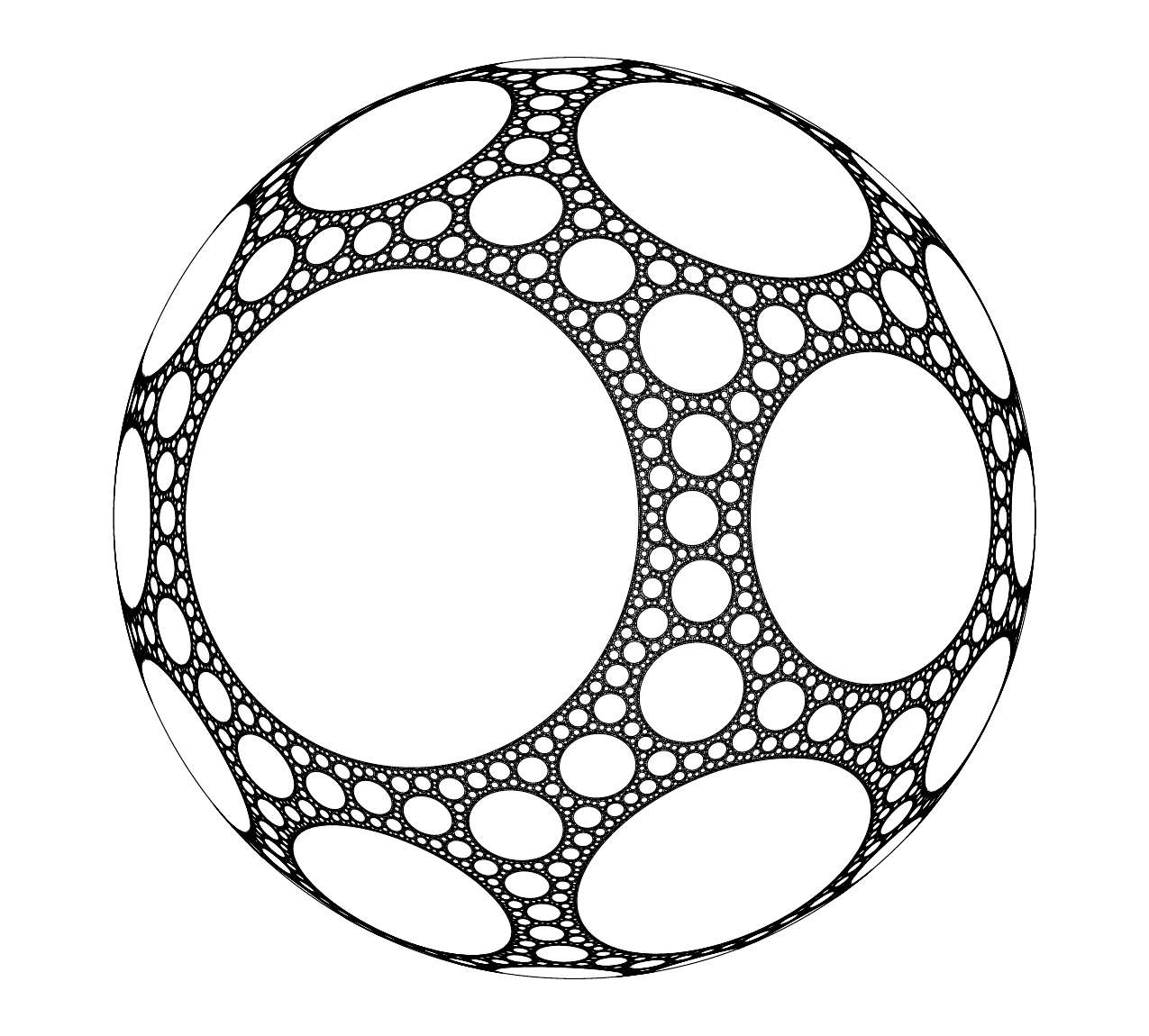}
  \includegraphics [height=4cm]  {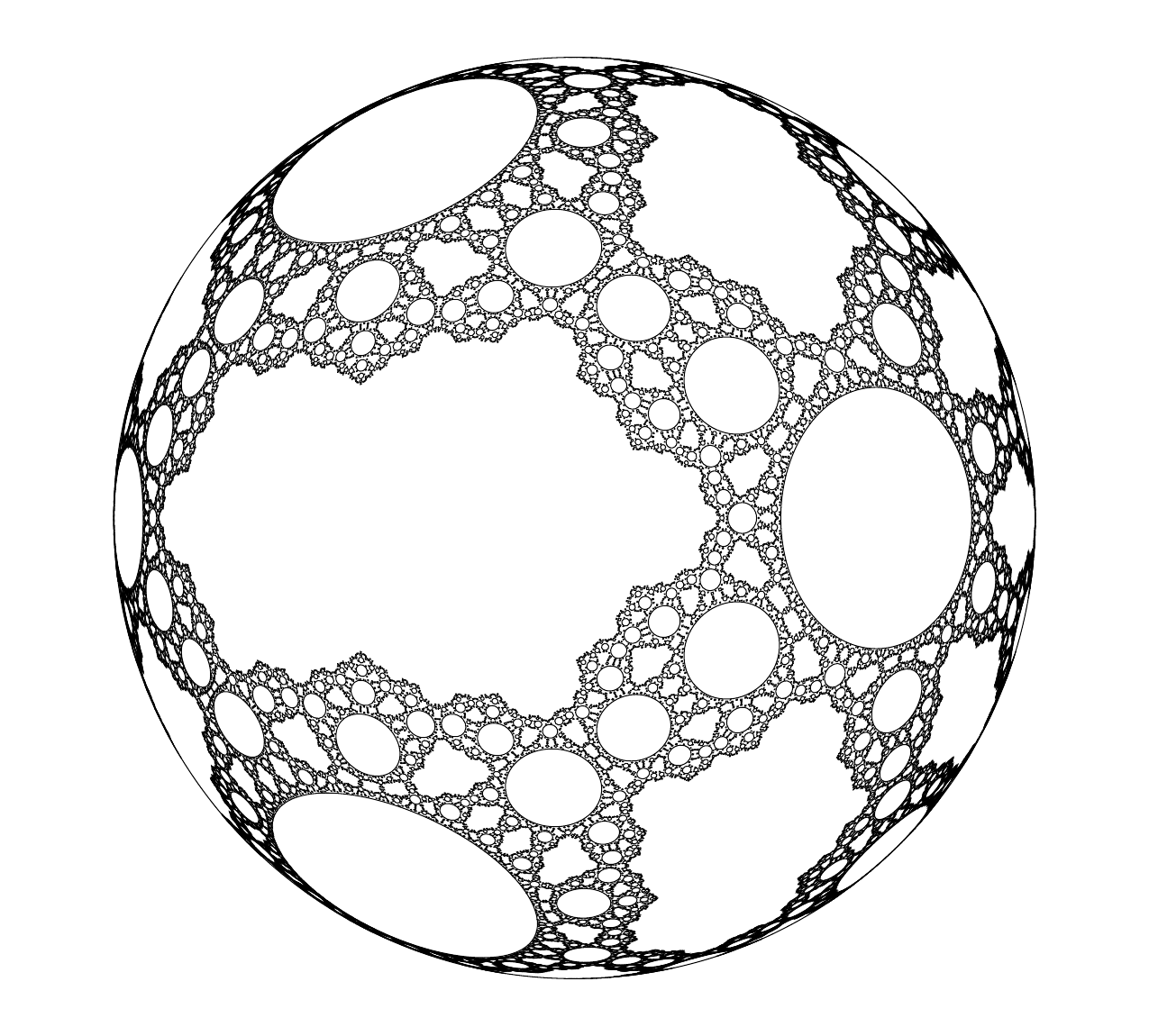} 
  
  \caption[Non-trivial quasiconformal deformation]{Non-trivial quasiconformal deformation\footnotemark} \label{fig.QC}
\end{figure}
 \footnotetext{Image credit: Curtis McMullen and Yongquan Zhang \cite{Zhang2022construction}}

 Note that $(n+2)$-distinct points on $\S^n$ form the set of vertices of a unique
 ideal hyperbolic $(n+1)$-simplex of $\H^{n+1}$. Gromov-Thurston's proof of Mostow rigidity theorem (\cite{Gromov1981hyperbolic}, \cite{thurston2022geometry}) uses the fact that a homeomorphism of $\S^n$ mapping vertices of every maximal volume $(n+1)$-simplex of $\H^{n+1}$ to vertices of a maximal volume $(n+1)$-simplex is a M\"obius transformation. 
 
Any $(n+2)$-distinct points on $\S^n$ form vertices of a zero-volume $(n+1)$-simplex of $\bH^{n+1}$ if and only if they lie in some codimension one sphere in $\S^n$. We also prove the following higher dimensional version of \cite[Theorem 1.3]{kim2022rigidity}, which answered McMullen's question for $n=2$:

\begin{theorem} \label{thm.McMullen} Let $\G<\so(n+1,1)$ be a  Zariski dense discrete subgroup. Suppose that there exists a $\La$-doubly stable circle in $\S^n$.
If the $\rho$-boundary map $f : \La \to \S^n$ maps vertices of every $(n+1)$-simplex of zero-volume to vertices of an $(n+1)$-simplex of zero-volume, then $f$ extends to a M\"obius transformation of $\S^n$.
\end{theorem}

We obtain a stronger statement  that unless  $f$ extends to a M\"obius transformation, the union of all vertices of $(n+1)$-simplexes of zero-volume whose images under $f$ form vertices of zero-volume $(n+1)$-simplexes has empty interior in $\Lambda$.

\subsection*{On the proof of Theorem \ref{high}}
We use the theory of Anosov representations.
Consider the following self-joining subgroup of $G = \so(n + 1, 1) \times \so(m + 1, 1)$:
$$\Gr := (\id \times \rho)(\Ga) = \{(\ga, \rho(\ga)) : \ga \in \Ga\}.$$
The crucial point is that, under the assumption that both $\Ga$ and $\rho(\Ga)$ are Zariski dense and convex cocompact and not conjugate to each other, we have that
\begin{multline*}
\text{$\Gr$ is  a Zariski dense
{\it Anosov} subgroup of $G$} \\
\text{with respect to a minimal parabolic subgroup }\end{multline*}(see the discussion around \eqref{ano}). 
Hence the recent classification theorem on higher rank conformal measures by Lee-Oh \cite{lee2020invariant} (Theorem \ref{bi}) and the ergodicity theorem of Burger-Landesberg-Lee-Oh \cite{burger2021hopf} (Theorem \ref{er}) apply to our setting, yielding that for any $\Gr$-conformal measure
on the limit set of $\Gr$, the associated Bowen-Margulis-Sullivan measure on $\Ga_\rho\ba G$ is ergodic  and conservative
for a unique one-parameter  diagonal flow $A_u=\{\exp tu:t\in \br\}$ where $u$ is a vector in the interior of the positive Weyl chamber.

A general higher rank conformal measure seems mysterious. However, the graph structure of our self-joining group $\Gr$ allows us to pin down a very explicit $\Ga_\rho$-conformal measure, which we call the graph-conformal measure \cite{kim2023rigidity}. 
Indeed, under the convex cocompactness hypothesis on $\Ga$,
the graph-conformal measure
is given by the pushforward measure $(\id \times f)_* (\cal H^{\delta}|_{\La})$, and this is the reason why we can relate
the Hausdorff measure $\cal H^\delta|_\La$ with dynamics on the
Anosov homogeneous space $\Ga_\rho\ba G$ in the proof of Theorem \ref{high}.

The conclusion of Theorem \ref{high} follows if
we show that $\Ga_\rho$ cannot be Zariski dense in $G$ (Lemma \ref{Zdense}).
We give a proof by contradiction. Suppose that $\Ga_\rho$ is Zariski dense.
Considering the action of $\Ga_\rho$ on
the space $\Upsilon_{\rho}$ of all ordered pairs $Y = (C, S)$ of a circle $C \subset \S^n$ and a codimension one sphere $S \subset \S^m$ intersecting the limit set $\La_{\rho} \subset \S^n \times \S^m$ of $\Gr$,
we are then able to prove, together with the work of Guivarch-Raugi \cite{Guivarch2007actions} and the aforementioned ergodicity  and conservativity result for the directional diagonal flows, that
for $\cal H^\delta|_\La$-almost all $\xi\in \La$, the $\Ga_\rho$-orbit of $Y\in \Upsilon_\rho$
containing $(\xi, f(\xi))$ is dense in the space $\Upsilon_\rho$.

On the other hand, we show that the existence of a $\La$-doubly stable circle in $\S^n$
implies that
     for any $Y_0 = (C_0, S_0) \in \Upsilon_{\rho}$ with $f(C_0 \cap \La) \subset S_0$,
    the orbit  $\Gr Y_0$ cannot be dense in $\Upsilon_{\rho}$ (Theorem \ref{thm.wKM}).
      This shows that $\Ga_\rho$ cannot be Zariski dense when $\cal H^\delta
      (\La_f)>0$. We also show that
      when $\Omega$ is disconnected, a $\La$-doubly stable circle exists
      (Lemma \ref{lem.doubly}).

\subsection*{Analogous question for rational maps} 
We close the introduction by the following question which seems natural in view of Sullivan's dictionary between Kleinian groups and rational maps (\cite{Sullivan1985quasiconformal}, \cite{McMullen1994classification}).
\begin{Q}\rm   Let $h_1, h_2:\hat \c \to \hat \c $ be 
rational  maps of degree at least $2$ whose  Julia sets 
are not contained in circles.
Suppose that  $h_2= F\circ h_1 \circ F^{-1} $
for some quasiconformal homeomorphism $F:\hat \c\to \hat \c$.
Suppose that  for the Julia set $J=J_{h_1}$ of $h_1$, there exists a $J$-doubly stable circle in $\hat \c$.
Let $$J_F:=\bigcup \left\{ C \cap J : \begin{matrix}
C \subset \hat \c \mbox{ is a circle such that} \\
F(C \cap J) \mbox{ is contained in a circle}
\end{matrix}
\right\} .$$

\begin{enumerate}
    \item If $J_F=J$, is  $F\in \Mob (\hat \c)$?

\item Suppose that $h_1, h_2$ are hyperbolic. Let $\delta=\text{dim}_H J$.
Is it true that
$$\text{either}\quad  J_F=
J \quad \text{ or } \quad \cal H^{\delta}(J_F) =0 ?$$
\end{enumerate}

\end{Q}

\subsection*{Added in proofs:}
   Using recent developments in the ergodic theory of transverse subgroups (\cite{CZZ_transverse}, \cite{KOW_indicators}, \cite{kim2024conformal}), Theorem \ref{high} can be extended to all discrete subgroups $\Ga$ of divergence type and  quasi-isometric deformations $\rho$, provided $\cal H^\delta$ is replaced by the unique $\delta$-dimensional $\Ga$-conformal measure on $\La$, where $\delta$ is the critical exponent of $\Ga$ \cite{Sullivan1979density}.
This covers all geometrically finite groups and type-preserving deformations. See Theorem \ref{high5}.

\subsection*{Organization}
The main goal of section \ref{sec.erg} is to prove Theorem \ref{ttt}, which we deduce 
from the classification of conformal measures in \cite{lee2020invariant} and  the ergodicity and conservativity of directional diagonal flows in \cite{burger2021hopf} with respect to the Bowen-Margulis-Sullivan measure associated to the $\Ga_\rho$-conformal measure constructed from the $\delta$-dimensional Hausdorff measure on $\La$. The main theorem of
 section \ref{sec.orbit} is Theorem \ref{ett} which we deduce from Theorem \ref{ttt} and a theorem of Guivarch-Raugi (Theorem \ref{GR}).
In section \ref{sec.wKM}, we discuss an obstruction to dense $\Ga_\rho$-orbits in the space $\Upsilon_\rho$
when a $\La$-doubly stable circle exists.
 In section \ref{sec.proof}, we give a proof of Theorem \ref{high}. We also discuss  a topological version of Theorem \ref{high} without convex cocompactness assumption (Theorem \ref{thm.topological}).

\subsection*{Acknowledgement} We would like to thank Curt McMullen for useful comments on the preliminary version. We are also grateful to him  and Yongquan Zhang for allowing us to use the beautiful image of Figure \ref{fig.QC}. 

\section{Ergodicity and graph-conformal measure} \label{sec.erg}

Let $(X_1,d_1)$ and $(X_2,d_2)$ be rank one Riemannian symmetric spaces.
Let $G$ be the product $G_1\times G_2$
where $G_1=\Isom^\circ (X_1)$ and $G_2=\Isom^\circ (X_2)$ are connected simple real algebraic groups of rank one.
Then $G=\Isom^\circ X$ where $X=X_1\times X_2$ is the Riemannian product.
 We fix a Cartan involution $\theta$ of the Lie algebra $\mathfrak{g}$ of $G$, and decompose $\fg$ as $\mathfrak g=\mathfrak k\oplus\mathfrak{p}$, where $\fk$ and $\fp$ are the $+ 1$ and $-1$ eigenspaces of $\theta$, respectively. 
We denote by $K$ the maximal compact subgroup of $G$ and choose a maximal abelian subalgebra $\fa$ of $\mathfrak p$.
Choosing a closed positive Weyl chamber $\fa^+$ of $\fa$, let $A:=\exp \mathfrak a$ and $A^+=\exp \mathfrak a^+$. The centralizer of $A$ in $K$ is denoted by $M$, and we let
$N^+$ and $ N =  N^-$ be the horospherical subgroups so that $\log   N^+ $ and
$\log N^-$ are  the sum of all negative and positive root subspaces for our choice of $A^+$ respectively.
We set $$P^+=MAN^+,\quad\text{and} \quad P=P^-=MAN;$$ they are minimal parabolic subgroups of $G$ that are opposite to each other.
The quotient $\F=G/P$ is known as the Furstenberg boundary of $G$, and is isomorphic to $K/M$. Let $\op N_K(\fa)$  be the normalizer of $\fa$ in $K$ and let $\cal W:=\op N_K(\fa)/M$ denote the Weyl group.
Let $w_0\in \op N_K(\fa)$ be the unique element in $\cal W$ such that
$w_0Pw_0^{-1}=P^+$.
  For each $g\in G$, we define 
   $$g^+:=gP\in \F \quad\text{and}\quad g^-:=gw_0P\in \F.$$

An element $g\in G$ is
loxodromic if $g=h a m h^{-1}$ for some $ a\in \inte A^+$,
$m\in M$ and $h\in G$. The Jordan projection of $g$
is defined to be $\lambda(g):=\log a \in \inte \fa^+$. 

In the rest of the section, let $\Delta$ be  a Zariski dense
discrete subgroup of $G$. 
 The {\it{limit cone}} $\L_{\D}\subset\fa^+$  is defined as the smallest closed cone containing all Jordan projections of loxodromic elements of $\D$. It is a convex subset of $\fa^+$ with non-empty interior \cite[Section 1.2]{Benoist1997proprietes}. Benoist showed that there exists a unique $\D$-minimal subset of $\F$, which is called the limit set of $\Delta$. 
 We denote it by $\La_\Delta$. 

\subsection*{Bowen-Margulis-Sullivan measures} Let $\F_i$ be the Furstenberg boundary of $G_i$, which is equal to the geometric boundary $\partial X_i$.
For each $i=1,2$, the Busemann function $\beta_{\xi_i}(x_i, y_i)$ is defined
as
\be \label{eqn.rankonebuse}
\beta_{\xi_i}(x_i, y_i)=\lim_{t\to \infty} d_i(\xi_{i,t}, x_i) -d_i (\xi_{i, t}, y_i)
\ee
where $\xi_{i, t}$ is a geodesic ray toward to $\xi_i$.
 For $\xi = (\xi_1, \xi_2) \in \F = \F_1 \times \F_2$ and $x = (x_1, x_2), y = (y_1, y_2) \in X$, the $\fa$-valued Busemann function is defined componentwise: $$\beta_{\xi}(x, y) = (\beta_{\xi_1}(x_1, y_1), \beta_{\xi_2}(x_2, y_2)) \in \fa$$ 
where we have identified $\fa = \fa_1 \oplus \fa_2$ with $\br^2$.

In the following we fix $o=(o_1, o_2)\in X$ so that the stabilizer of $o$ is $K$.
\begin{Def} 
    For a linear form $\psi \in \fa^*$, a Borel probability measure $\nu$ on $\F$ is called a \emph{$(\D, \psi)$-conformal measure} (with respect to $o$) if for any $g \in \D$ and $\xi \in \F$, $${d g_* \nu \over d \nu}(\xi) = e^{\psi(\beta_{\xi}(o, g o))}$$ where $g_*\nu(B) = \nu(g^{-1}B)$ for any Borel subset $B \subset \F$.  By a $\Delta$-conformal measure, we mean a $(\Delta, \psi)$-conformal measure for some $\psi\in \fa^*$.
\end{Def}

Two points $\xi=(\xi_1, \xi_2)$ and
$\eta=(\eta_1, \eta_2)$ are in general position
if $\xi_i\ne \eta_i$ for each $i=1,2$.
Let $\F^{(2)}$ be the set of all pairs $(\xi, \eta)\in \F\times \F$ which are in general position. The map $G\to \F^{(2)} \times \fa$, $g \mapsto (g^+, g^-,  \beta_{g^+}(o, g o))$ induces a $G$-equivariant homeomorphism $G/M\simeq \F^{(2)} \times \fa$, called the Hopf-parametrization.

For a $(\Delta,\psi)$-conformal measure $\nu$ supported on the limit set $\La_\Delta$
for some $\psi \in \fa^*$, we can define the following Borel measure on $G/M$ using the Hopf-parametrization: \be \label{eqn.bms}
d\tilde m^{\BMS}_{\nu}(gM) = e^{\psi(\beta_{g^+}(o, g o) + \beta_{g^-}(o, g o))} d\nu(g^+) d\nu(g^-) db
\ee
where $db$ is the Haar measure on $\fa$.
By integrating over the fiber of $G \to G/M$ with respect to the Haar measure of $M$, we will consider $\tilde m^{\BMS}_{\nu}$ as a Radon measure on $G$, which is then a left $\Delta$-invariant and right $AM$-invariant measure. We denote by $m^{\BMS}_{\nu}$ the Radon measure on $\Delta\ba G$ induced by $\tilde m^{\BMS}_{\nu}$. This measure is called the Bowen-Margulis-Sullivan measure associated to $\nu$. Its support is $$\Omega_{\D}=\{[g]\in \Delta\ba G: g^{\pm}\in \La_\Delta\}.$$ We refer to \cite{edwards2020anosov} for a detailed discussion on the construction of this measure.

\subsection*{Self-joinings of convex cocompact groups}
In the rest of the section, we will consider the following special type of discrete subgroups of $G$. Let $\Ga<G_1$ be a Zariski dense convex cocompact subgroup and
$\rho:\Ga\to G_2$ be a Zariski dense convex cocompact faithful representation.
Define the self-joining of $\Ga$ via $\rho$: $$\Gr := (\id \times \rho)(\Ga) = \{ (\ga, \rho(\ga)) : \ga \in \Ga\}$$ which is a discrete subgroup of $G$. 

It follows from the convex cocompactness assumption for $\Ga$ and $\rho(\Ga)$ that
if we fix a word metric $|\cdot|$ on $\Ga$ for some finite generating set and fix $o_1\in X_1$ and $o_2\in X_2$, then there exist constants $C,C'>0$ such that
 for all $\ga\in \Ga$, 
\be\label{ano}  \min \{d_1 (\ga o_1, o_1), d_2(\rho(\ga) o_2, o_2)\} \ge C |\ga| -C' .\ee
In other words, $\Ga_\rho$
is an Anosov subgroup of $G$ with respect to a minimal parabolic subgroup (\cite{Labourie2006anosov}, \cite{Guichard2012anosov}, \cite{Kapovich2017anosov}). 
This enables us to use the general theory developed for Anosov subgroups. We remark that ergodic theory for self-joining groups of convex cocompact groups was first studied in
\cite{Burger1993intersection}.

Since both $G_1$ and $G_2$ are simple, we have the following equivalence between Zariski density of the self-joining and the rigidity of $\rho$, first observed by Dal'Bo-Kim \cite{DalBoKim_criterion}:

\begin{lem} \label{Zdense}
The subgroup $\Ga_\rho$ is Zariski dense in $G$ if and only if $\rho$ does not extend to a Lie group isomorphism $G_1 \to G_2$. 
\end{lem}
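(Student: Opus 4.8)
The plan is to analyze the Zariski closure $\mathbf{H}$ of $\Gamma_\rho$ inside $\mathbf{G} = \mathbf{G}_1 \times \mathbf{G}_2$, where $\mathbf{G}_i$ denotes the Zariski closure of $G_i$, and to use the fact that $\mathbf{G}_1,\mathbf{G}_2$ are simple to force $\mathbf{H}$ to be the graph of an algebraic isomorphism whenever it is proper. First I would record the easy direction: if $\rho$ extends to a Lie group isomorphism $\sigma: G_1 \to G_2$, then $\Gamma_\rho$ is contained in the graph $\{(g,\sigma(g)) : g \in G_1\}$, which is a proper algebraic subgroup of $G$ (it has half the dimension), so $\Gamma_\rho$ is not Zariski dense. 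For the converse, suppose $\Gamma_\rho$ is not Zariski dense, and let $\mathbf{H}$ be its Zariski closure; it is a proper algebraic subgroup of $\mathbf{G}$. Consider the two projections $\pi_i : \mathbf{H} \to \mathbf{G}_i$. Since $\pi_i(\Gamma_\rho)$ equals $\Gamma$ (for $i=1$) or $\rho(\Gamma)$ (for $i=2$), both of which are Zariski dense in $G_i$ by hypothesis, each $\pi_i$ has Zariski dense image, hence is surjective onto $\mathbf{G}_i$ (the image of an algebraic group homomorphism is closed).

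Next I would invoke Goursat's lemma in the algebraic-group setting. The kernel $\mathbf{N}_1 := \ker \pi_1 \subset \{e\} \times \mathbf{G}_2$ is a normal algebraic subgroup of $\mathbf{H}$, and since $\pi_1$ is surjective, $\{e\}\times\mathbf{N}_1'$ — more precisely, the image of $\mathbf{N}_1$ under $\pi_2$ — is normal in $\mathbf{G}_2$. Because $\mathbf{G}_2$ is simple (as an algebraic group, up to center, but here $\so(m+1,1)$-type factors are simple and centerfree in the relevant sense), this image is either trivial or all of $\mathbf{G}_2$. If it is all of $\mathbf{G}_2$, then $\{e\}\times\mathbf{G}_2 \subseteq \mathbf{H}$, and combined with surjectivity of $\pi_1$ this forces $\mathbf{H} = \mathbf{G}$, contradicting properness. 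Hence $\mathbf{N}_1$ is trivial, so $\pi_1 : \mathbf{H} \to \mathbf{G}_1$ is an isomorphism of algebraic groups; symmetrically $\pi_2$ is an isomorphism. Therefore $\sigma := \pi_2 \circ \pi_1^{-1} : \mathbf{G}_1 \to \mathbf{G}_2$ is an algebraic isomorphism, $\mathbf{H}$ is its graph, and since $\Gamma_\rho \subset \mathbf{H}$ is the graph of $\rho$ over $\Gamma$, we get $\rho = \sigma|_\Gamma$; restricting $\sigma$ to the identity components $G_1 = \mathbf{G}_1(\mathbb{R})^\circ \to G_2$ gives the desired Lie group isomorphism extending $\rho$.

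The main obstacle — and the only genuinely delicate point — is the passage between abstract homomorphisms of $\Gamma$ and morphisms of algebraic groups: one must check that $\mathbf{H}$, defined purely as a Zariski closure, actually projects \emph{surjectively} onto each $\mathbf{G}_i$ and that the Goursat argument can be run with algebraic (not merely abstract) subgroups, using that $\mathbf{G}_1$ and $\mathbf{G}_2$ are simple so that their only normal algebraic subgroups are finite central ones and the whole group. A minor subtlety is that simplicity only guarantees normal subgroups are central-or-everything, so the "trivial image" case of Goursat really yields that $\pi_i$ has finite central kernel; one then notes that $\so(n+1,1)$ has trivial center, or argues at the level of $\mathrm{PGL}$, to upgrade this to an honest isomorphism. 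Since this is exactly \cite[Lemma 4.1]{kim2022rigidity}, I would simply cite it, but the sketch above is the argument I would reconstruct.
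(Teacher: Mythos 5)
The paper does not reprove this lemma---it is quoted from \cite[Lemma 4.1]{kim2022rigidity}---and your Goursat-style argument via the Zariski closure is exactly the standard proof given there: Zariski density of $\Ga$ and $\rho(\Ga)$ makes both projections of the closure $\mathbf{H}$ surjective, and simplicity of the factors forces $\mathbf{H}$ to be either all of $G$ or the graph of an algebraic isomorphism restricting to a Lie group isomorphism $G_1\to G_2$ that extends $\rho$. The argument is correct; the only slips are cosmetic (you invoke surjectivity of $\pi_1$ where you mean $\pi_2$ when pushing normality of $\ker\pi_1$ into $\mathbf{G}_2$, and the finite-central-kernel caveat you rightly flag evaporates because $\so(n+1,1)$ is adjoint).
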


Since $\Ga$ and $\rho(\Ga)$ are convex cocompact, there exists a unique $\rho$-equivariant  continuous embedding $f : \La \to \F_2$; this is a special case of a theorem of Tukia \cite{Tukia1985isomorphisms}, which can also be seen directly as follows. Since $\Ga < G_1$ is convex cocompact, $\Ga$ is a hyperbolic group and an orbit map $\Ga \to X_1$ is a quasi-isometric embedding, where $\Ga$ is equipped with a word metric. Hence, it follows from a standard result for Gromov hyperbolic spaces (e.g. \cite[Chapter III.H, Theorem 3.9]{Bridson1999metric}) that there exists a unique $\Ga$-equivariant homeomorphism $f_1: \partial \Ga \to \La_{\Ga} = \La$ where $\partial \Ga$ is the Gromov boundary of $\Ga$. Similarly, we obtain a unique $\rho(\Ga)$-equivariant homeomorphism $f_2 : \partial \rho(\Ga) \to \La_{\rho(\Ga)}$. Since $\rho : \Ga \to \rho(\Ga)$ is an isomorphism, there exists a unique $\rho$-equivariant homeomorphism $f_0 : \partial \Ga \to \partial \rho(\Ga)$. Therefore, $f := f_2 \circ f_0 \circ f_1^{-1} : \La \to \La_{\rho(\Ga)}$ is the unique $\rho$-equivariant homeomorphism into $\F_2$.

Hence, for $\Ga_\rho$ Zariski dense, its limit set $\La_{\rho} \subset \F$ is of the form $$\La_{\rho} = (\id \times f)(\La)$$
where $\id\times f:\La \to \La_\rho$ is the diagonal embedding.
We denote by $\L_\rho\subset \fa^+$ the limit cone of $\Gr$:
$$\L_\rho=\L_{\Gr}.$$

Since $\Ga_\rho$ is  Anosov,  the following Theorems \ref{bi} and \ref{er} are special cases of theorems proved in those respective papers. Let $\fa^*$ denote the set of all $\R$-linear forms on $\fa$.

\begin{theorem}[Classification of conformal measures, {\cite[Theorem 1.3, Proposition 4.4]{lee2020invariant}}] \label{bi} Suppose that $\Gr$ is Zariski dense in $G$. 
The space of unit vectors in $\inte \L_\rho$ is in bijection with the space of all $\Gr$-conformal measures  on $\La_\rho$. Moreover,  each $\Gr$-conformal measure on
$\La_\rho$ is a $(\Gr, \psi)$-conformal measure for a unique linear form $\psi\in \fa^*$.
\end{theorem}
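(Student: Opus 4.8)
The plan is to realize both sides of the claimed bijection as parametrized by the linear forms tangent to Quint's growth indicator function of $\Gr$. Recall that the growth indicator $\psi_{\Gr}\colon \fa^+\to\br\cup\{-\infty\}$ is the concave upper semicontinuous homogeneous function whose superlevel behaviour records the exponential growth rate of $\#\{\ga\in\Gr : \mu(\ga)\in \mathcal C\}$ along open cones $\mathcal C$, where $\mu\colon G\to\fa^+$ is the Cartan projection. Since $\Gr$ is Anosov (hence Zariski dense, with $\mu(\ga)$ staying boundedly close to the ray directions of each rank-one factor by \eqref{ano}), one knows $\psi_{\Gr}$ is finite and positive on $\inte\L_\rho$, vanishes on $\partial\L_\rho$, and is \emph{strictly} concave on $\inte\L_\rho$. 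Consequently the assignment $u\mapsto\psi_u$, where $\psi_u\in\fa^*$ is the unique linear form with $\psi_u\ge\psi_{\Gr}$ on $\L_\rho$ and $\psi_u(u)=\psi_{\Gr}(u)$, is a bijection from the set of unit vectors of $\inte\L_\rho$ onto the set of such tangent forms. So it suffices to prove: (i) for every tangent form $\psi=\psi_u$ there is a $(\Gr,\psi)$-conformal measure on $\La_\rho$; (ii) any $(\Gr,\psi)$-conformal measure supported on $\La_\rho$ has $\psi$ tangent to $\psi_{\Gr}$, hence $\psi=\psi_u$ for a unique $u\in\inte\L_\rho$; (iii) for a fixed tangent $\psi$ the conformal measure is unique.

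For (i) I would run the higher-rank Patterson--Sullivan construction: for $s>1$ consider the weighted Poincar\'e series $\sum_{\ga\in\Gr} e^{-s\,\psi(\mu(\ga))}$, whose abscissa of convergence is exactly $1$ by the definition of $\psi_{\Gr}$ and the tangency of $\psi$; then take a weak-$*$ subsequential limit as $s\downarrow 1$ of the normalized atomic measures $\big(\sum_{\ga} e^{-s\psi(\mu(\ga))}\big)^{-1}\sum_\ga e^{-s\psi(\mu(\ga))}\,\delta_{\ga^+}$ on $\F$. Using that $\Gr$ is Anosov one checks the series diverges at $s=1$, so no Patterson-style modifying function is needed, the limit measure is supported on $\La_\rho$, and the standard Busemann-cocycle computation shows it is $(\Gr,\psi)$-conformal.

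For (ii) and (iii) the main tool is the higher-rank Shadow Lemma: for a $(\Gr,\psi)$-conformal measure $\nu$ on $\La_\rho$ and $r$ large, $\nu\big(O_r(o,\ga o)\big)\asymp e^{-\psi(\mu(\ga))}$ for all $\ga\in\Gr$, where $O_r(o,\ga o)$ is the shadow in $\F$, viewed from $o$, of the ball of radius $r$ about $\ga o$; the Anosov property supplies the directional control ($\mu(\ga)$ remains deep inside $\inte\L_\rho$) that makes this estimate uniform. Summing the lower bound over $\ga$ and comparing with a Vitali-type cover of $\La_\rho$ forces $\sum_\ga e^{-\psi(\mu(\ga))}=\infty$, and a growth-indicator argument then gives $\psi\ge\psi_{\Gr}$ on $\L_\rho$ with equality in some interior direction, i.e.\ $\psi=\psi_u$, with $u$ unique by strict concavity. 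For (iii), given two conformal measures $\nu_1,\nu_2$ for the same $\psi$, the shadow lemma yields $\nu_1\asymp\nu_2$ on shadows; since shadows generate the Borel $\sigma$-algebra of $\La_\rho$, we get $\nu_1\ll\nu_2\ll\nu_1$ with bounded Radon--Nikodym derivative, and this derivative is $\Gr$-invariant because $\nu_1,\nu_2$ carry the same Busemann cocycle, hence a.e.\ constant by ergodicity of the $\Gr$-action on $(\La_\rho,\nu_2)$ (equivalently, ergodicity of the associated Bowen--Margulis--Sullivan measure, which is where Theorem \ref{er} or its precursor enters), so $\nu_1=\nu_2$.

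The main obstacle, and the technical heart drawn from \cite{lee2020invariant} and its antecedents, is the fine analysis of $\psi_{\Gr}$ for Anosov groups --- finiteness, positivity and strict concavity on $\inte\L_\rho$, vanishing on $\partial\L_\rho$, and the divergence of the Poincar\'e series at the critical tangent form. These are exactly what upgrade the three-way correspondence (interior unit vectors $\leftrightarrow$ tangent forms $\leftrightarrow$ conformal measures) from a mere surjection to a bijection, and what single out $\inte\L_\rho$ rather than all of $\L_\rho$; by comparison the shadow lemma and the ergodicity input are routine once the directional control from \eqref{ano} is available.
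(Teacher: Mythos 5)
The paper does not prove this statement: Theorem \ref{bi} is imported verbatim from \cite{lee2020invariant}, and the only ``proof'' supplied in the text is the observation that the estimate \eqref{ano} makes $\Gr$ an Anosov subgroup with respect to a minimal parabolic, so that the general classification theorem of Lee--Oh applies. Your outline is a faithful reconstruction of the strategy of that reference (growth indicator and its strict concavity/vertical tangency for Anosov groups, Patterson--Sullivan construction at a tangent form, higher-rank shadow lemma, ergodicity for uniqueness), so as a sketch of the cited proof it is on target. Two points deserve tightening. First, your step (ii) derives both the divergence of $\sum_\ga e^{-\psi(\mu(\ga))}$ and the inequality $\psi\ge\psi_{\Gr}$ from the lower shadow bound; in fact the inequality $\psi\ge\psi_{\Gr}$ comes from the \emph{upper} shadow bound together with bounded multiplicity of shadows (giving convergence of $\sum_\ga e^{-s\psi(\mu(\ga))}$ for $s>1$), while the lower bound plus a Borel--Cantelli argument on the conical limit set (all of $\La_\rho$, by Anosov) gives divergence at $s=1$ and hence tangency; as written the logic is reversed. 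Second, the ``Moreover'' clause of the theorem --- that the linear form $\psi$ is uniquely determined by the measure, equivalently that $u\mapsto\nu_u$ is injective --- is not addressed: it requires the standard argument that if $\nu$ is simultaneously $(\Gr,\psi_1)$- and $(\Gr,\psi_2)$-conformal then $(\psi_1-\psi_2)(\beta_\xi(o,\ga o))=0$ for $\nu$-a.e.\ $\xi$ and all $\ga$, and Zariski density of $\Gr$ forces these vectors to span $\fa$, whence $\psi_1=\psi_2$. Both are minor and repairable, but they are needed to upgrade your three-way correspondence to the stated bijection.
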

We will denote this bijection by
\be\label{bij} u\mapsto \nu_u .\ee 
For each unit vector $u\in \inte\L_\rho$,
we also denote by $\psi_u\in \fa^*$ the (unique) linear form associated to $\nu_u$, that is,
$\nu_u$ is $(\Gr, \psi_u)$-conformal.

\subsection*{Ergodicity} For simplicity, we set
$$\tilde m^{\BMS}_{u}:= \tilde m^{\BMS}_{\nu_u}\quad \text{ and }\quad 
 m^{\BMS}_{u}:= m^{\BMS}_{\nu_u}$$
For any non-zero vector $u\in \fa$, we consider the following one-parameter semigroup/subgroup:
$$A_u^+:=\{ a_{tu} : t\ge 0\}\text{ and }A_u:=\{ a_{tu} : t\in \br \} .$$
where $a_{tu}=\exp tu$.
The following ergodicity result due to Burger-Landesberg-Lee-Oh \cite{burger2021hopf} is the  main ingredient of our proof of Theorem \ref{high}:
\begin{theorem}[Ergodicity of directional flows, {\cite{burger2021hopf}}] \label{er} Suppose that $\Gr$ is Zariski dense in $G$. 
For any unit vector $u\in \inte \L_\rho$,
  $( m^{\BMS}_{u}, \Gr\ba G)$ is ergodic  and conservative for the $A_u$-action. In particular,
  for $ m^{\BMS}_{u}$-almost all $x$, $xA_u^+$ is dense in $\Omega_{\Gr}$.
\end{theorem}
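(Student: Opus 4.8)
The plan is to derive Theorem~\ref{er} from the general higher-rank Hopf--Tsuji--Sullivan dichotomy of Burger--Landesberg--Lee--Oh~\cite{burger2021hopf}, of which Theorem~\ref{er} is the Anosov special case; here I sketch the argument. Fix a unit vector $u\in\inte\L_\rho$ and abbreviate $\nu=\nu_u$ (the $(\Gr,\psi_u)$-conformal measure on $\La_\rho$ provided by Theorem~\ref{bi}) and $m=m^{\BMS}_u$. There are two main steps: (i) the system $(m,A_u)$ is \emph{conservative}; (ii) a Hopf-type argument then upgrades conservativity to ergodicity, the nontrivial point being that $A_u$ is only \emph{partially} hyperbolic inside $\Gr\ba G$, with a non-compact neutral direction.

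For (i) the plan is a Borel--Cantelli argument along the flow. One first records a shadow lemma for $\nu$: for $R$ large there is $c>1$ with $c^{-1}e^{-\psi_u(\mu(\ga))}\le \nu(\mathcal O_R(\ga))\le c\,e^{-\psi_u(\mu(\ga))}$ for all $\ga\in\Gr$, where $\mu$ denotes the Cartan projection and $\mathcal O_R(\ga)$ is the shadow in $\F$ of the $R$-ball around $\ga o$ (standard for conformal measures of Zariski dense subgroups, cf.\ Quint and Lee--Oh). Because $\Gr$ is Anosov, $\psi_u$ is tangent to the growth indicator $\psi_{\Gr}$ at $u$ and one checks that the Poincar\'e series $\sum_{\ga\in\Gr}e^{-\psi_u(\mu(\ga))}$ diverges; combined with the fact that $\La_\rho$ is covered by shadows of elements whose Cartan projections cluster along $\br_{>0}u$, Borel--Cantelli shows that for $\nu$-a.e.\ $\xi$ the flow line $xA_u^+$ (for $x$ projecting to $\xi$) enters the shadows of infinitely many group elements, i.e.\ $m$-a.e.\ forward $A_u^+$-orbit returns to a fixed compact subset of $\Omega_{\Gr}$ at an unbounded set of times. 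Hence $(m,A_u)$ is conservative.

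For (ii), run the Hopf argument. Since $u\in\inte\fa^+$, one has $a_{-tu}\,n\,a_{tu}\to e$ as $t\to+\infty$ for $n$ in one horospherical subgroup and as $t\to-\infty$ for $n$ in the other; hence, for an $A_u$-invariant $\varphi\in L^1(m)$, applying the Hopf ratio ergodic theorem (legitimate by conservativity of the infinite measure $m$) against continuous compactly supported comparison functions shows that $\varphi$ agrees $m$-a.e.\ with a function invariant under the right action of $N^+$ and, running time backwards, also under the right action of $N^-$. Since $N^+$ and $N^-$ generate $G$, it remains only to upgrade this ``double horospherical invariance'' to right-$MA$-invariance, hence right-$G$-invariance, hence essential constancy --- and this is the main obstacle. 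The difficulty is twofold: in an $N^-AMN^+$ chart the measure $m$ is not a product (the $\fa$-coordinate is coupled to $\F^{(2)}$ through the Busemann cocycle), and the neutral directions $A/A_u\cong\br$ and $M$ of the flow are invisible to $N^\pm$. The plan is to work in the Hopf coordinates $G/M\simeq\F^{(2)}\times\fa$, where $m$ does factor as $\nu\otimes\nu$ times Lebesgue on $\fa$ up to the explicit conformal density, reduce the $N^\pm$- and $A_u$-invariance of $\varphi$ to a statement about the ergodic behaviour of $\nu$ itself, and then invoke the uniqueness of the $(\Gr,\psi_u)$-conformal measure from Theorem~\ref{bi} --- equivalently the $\Gr$-ergodicity of $(\F,\nu)$ and the $N^+$-ergodicity of the associated Burger--Roblin measure for Anosov groups, from Lee--Oh~\cite{lee2020invariant} --- to rule out nontrivial measurable holonomy in the $M$- and $A/A_u$-directions. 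This is exactly the place where the genuinely higher-rank phenomenon must be handled and where Zariski density of $\Gr$ in $G$ is used essentially. Granting it, $\varphi$ is right-$MA$-invariant, hence $m$-a.e.\ constant, so $(m,A_u)$ is ergodic.

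Finally, the ``in particular'' is a standard consequence of ergodicity: $\Omega_{\Gr}$ is second countable and $m$ has full support there, so $A_u$-ergodicity gives $\overline{xA_u}=\Omega_{\Gr}$ for $m$-a.e.\ $x$ (for a basic open set $B$, the $A_u$-invariant set $\{x:\overline{xA_u}\cap B=\varnothing\}$ has non-full measure since it misses $B$), and conservativity (Poincar\'e recurrence) lets one replace $A_u$ by the forward semigroup $A_u^+$, whence $xA_u^+$ is dense in $\Omega_{\Gr}$ for $m$-a.e.\ $x$.
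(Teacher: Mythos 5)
First, a point of framing: the paper does not prove Theorem \ref{er} at all. It is imported verbatim from Burger--Landesberg--Lee--Oh \cite{burger2021hopf}, quoted as the Anosov special case of their higher-rank Hopf--Tsuji--Sullivan dichotomy; the paper's only ``proof'' is the observation that $\Gr$ is Anosov so the hypotheses of that theorem are met. Your proposal is therefore not being compared against an argument in this paper but against the cited reference, and judged on that basis it is a reasonable road map of the right strategy (conservativity via shadows, then a Hopf argument) but it has genuine gaps at precisely the two points where the theorem is hard.

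For conservativity, divergence of the full Poincar\'e series $\sum_{\ga}e^{-\psi_u(\mu(\ga))}$ is not the relevant condition: since the recurrence you need is along the single direction $u$, what must diverge is a \emph{directional} (tubular) series, summing only over $\ga$ with $\mu(\ga)$ within bounded distance of $\br_{>0}u$, and the shadows in your Borel--Cantelli argument must be the corresponding directional shadows. Establishing this divergence for every $u\in\inte\L_\rho$ is itself a nontrivial theorem for Anosov subgroups (it rests on the reparametrization/thermodynamic description of the limit cone), not a routine check, and your sketch elides it. More seriously, in step (ii) you correctly locate the crux --- killing measurable holonomy in the neutral directions $\ker\psi_u$ and $M$ --- but the mechanism you propose (uniqueness of the $(\Gr,\psi_u)$-conformal measure, equivalently ergodicity of $\nu_u$ or of the Burger--Roblin measure) does not do this job: uniqueness of the conformal measure in a \emph{fixed} direction says nothing about whether an $N^+$- and $N^-$-saturated measurable function can still depend on the transverse $\exp(\ker\psi_u)\times M$ coordinate. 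The actual argument in \cite{burger2021hopf} shows that the group of essential values of the associated $\fa\oplus\op{Lie}(M)$-valued holonomy cocycle is everything, using Benoist's theorem on the density of the subgroup generated by the pairs (Jordan projection, $M$-component) of elements of a Zariski dense subgroup, combined with the directional recurrence from step (i). Also note that your remark ``$N^+$ and $N^-$ generate $G$'' is a red herring: the Hopf argument yields leafwise constancy only on conull sets that need not chain, which is exactly why the local product structure and the holonomy/essential-value analysis are unavoidable. The final paragraph (ergodicity plus conservativity plus full support implies a.e.\ dense forward orbits) is correct and standard.
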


\subsection*{Graph-conformal measure} 
Let $\nu_\Ga$ be the $\Ga$-conformal measure supported on the limit set $\La$ of $\Ga$; since $\Ga$ is convex cocompact, it exists uniquely \cite{Sullivan1979density}. It turns out that the  measure 
$ (\id \times f)_*\nu_\Ga $ is a $\Ga_\rho$-conformal measure,
where $\id\times f:\La \to \La_\rho$ is the diagonal embedding.
We called this measure the graph-conformal measure in \cite{kim2023rigidity}. More precisely, we have the following lemma, thanks to which
we were able to apply
Theorem \ref{er} in the proof of Theorem \ref{high}:  we denote by $\delta_{\Ga}$ the critical exponent of $\Ga$.
\begin{lemma}\cite[Proposition 4.9]{kim2023rigidity}\label{lem.pushforward}
The measure $$(\id \times f)_*\nu_\Ga $$ is a 
$(\Gr,\sigma_1)$-conformal measure supported on $\La_{\rho}$, where
 $\sigma_1 \in \fa^*$ is the linear form given by
$\sigma_1(t_1, t_2) = \delta_{\Ga} t_1$ for $(t_1, t_2) \in \fa$.
\end{lemma}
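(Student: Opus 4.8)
The plan is to verify directly that $\mu := (\id\times f)_*\nu_\Ga$ satisfies the conformality equation with respect to the claimed linear form $\sigma_1$, using two inputs: the conformality of $\nu_\Ga$ on $\La$ for the first factor, and the fact that $f$ is $\rho$-equivariant for the second factor. First I would recall that since $\Ga$ is convex cocompact, $\nu_\Ga$ is the $\delta_\Ga$-dimensional Patterson-Sullivan measure on $\La\subset\F_1$, i.e.\ it is $(\Ga,\delta_\Ga)$-conformal in the rank-one sense:
\[
\frac{d\,\ga_*\nu_\Ga}{d\nu_\Ga}(\xi_1) = e^{\delta_\Ga\,\beta_{\xi_1}(o_1,\ga o_1)}\qquad\text{for }\ga\in\Ga,\ \xi_1\in\La.
\]
Then $\mu$ is a Borel probability measure on $\F=\F_1\times\F_2$, supported on $(\id\times f)(\La)=\La_\rho$, which is the limit set of $\Gr$.

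Next I would compute $\dfrac{d\,(\ga,\rho(\ga))_*\mu}{d\mu}$ at a point $\xi=(\xi_1,\xi_2)=(\xi_1,f(\xi_1))\in\La_\rho$. Because $f$ is a homeomorphism onto its image intertwining the $\Ga$-action on $\La$ with the $\rho(\Ga)$-action on $f(\La)$, pushing forward through the map $\id\times f$ commutes with the group actions, so for any Borel $B\subset\F$,
\[
(\ga,\rho(\ga))_*\mu(B) = \nu_\Ga\big((\id\times f)^{-1}((\ga,\rho(\ga))^{-1}B)\big) = \nu_\Ga\big(\ga^{-1}(\id\times f)^{-1}(B)\big) = (\ga_*\nu_\Ga)\big((\id\times f)^{-1}(B)\big).
\]
Hence $(\ga,\rho(\ga))_*\mu = (\id\times f)_*(\ga_*\nu_\Ga)$, and the Radon-Nikodym derivative transports accordingly: for $\xi_1\in\La$,
\[
\frac{d\,(\ga,\rho(\ga))_*\mu}{d\mu}(\xi_1,f(\xi_1)) = \frac{d\,\ga_*\nu_\Ga}{d\nu_\Ga}(\xi_1) = e^{\delta_\Ga\,\beta_{\xi_1}(o_1,\ga o_1)}.
\]
It then remains to recognize the right-hand side as $e^{\sigma_1(\beta_\xi(o,(\ga,\rho(\ga))o))}$: since the $\fa$-valued Busemann cocycle on $G=G_1\times G_2$ is defined componentwise, $\beta_\xi(o,(\ga,\rho(\ga))o) = (\beta_{\xi_1}(o_1,\ga o_1),\,\beta_{f(\xi_1)}(o_2,\rho(\ga)o_2))$, and the linear form $\sigma_1(t_1,t_2)=\delta_\Ga t_1$ picks out precisely $\delta_\Ga\,\beta_{\xi_1}(o_1,\ga o_1)$. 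This is the required identity, so $\mu$ is $(\Gr,\sigma_1)$-conformal.

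The only genuinely nontrivial point — and the one I would present most carefully — is the interchange of pushforward with the group action, which hinges on the equivariance identity $f(\ga\,\xi_1) = \rho(\ga)\,f(\xi_1)$ for $\xi_1\in\La$; this is exactly Tukia's theorem \cite{Tukia1985isomorphisms} quoted in the excerpt, and it guarantees both that $\id\times f$ is $\Gr$-equivariant from the $\Ga$-action on $\La$ and that its image is the $\Gr$-minimal set $\La_\rho$. A minor subtlety worth a sentence is that the Radon-Nikodym derivatives are computed only $\nu_\Ga$-a.e., which suffices since $\mu$ is by construction concentrated on $\La_\rho=(\id\times f)(\La)$ and the conformality relation need only hold $\mu$-a.e.\ (in fact the cocycle identity then propagates everywhere on $\La_\rho$ by continuity of the Busemann function). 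Finally I would note that $\sigma_1$ is the linear form one expects: it is tangent to the growth indicator of $\Gr$ at the direction corresponding to the first coordinate of the limit cone, consistent with Theorem~\ref{bi}, though this observation is not needed for the proof.
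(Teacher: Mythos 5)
Your proof is correct. The paper does not include its own proof of this lemma (it is quoted from \cite[Proposition 4.9]{kim2023rigidity}), but your direct verification --- pushing forward the rank-one conformality of $\nu_\Ga$ through $\id\times f$, using the $\rho$-equivariance $f(\ga\xi_1)=\rho(\ga)f(\xi_1)$ to commute the pushforward with the $\Gr$-action, and observing that $\sigma_1$ applied to the componentwise $\fa$-valued Busemann cocycle recovers exactly $\delta_\Ga\,\beta_{\xi_1}(o_1,\ga o_1)$ --- is the standard and essentially unique argument for this statement, and your handling of the a.e.\ nature of the Radon--Nikodym derivative on the support $\La_\rho$ is appropriate.
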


We now deduce Theorem \ref{ttt} from
Theorems \ref{bi} and \ref{er}: first, there exists a unique unit vector \be\label{urho} u_\rho\in \inte \L_\rho
\text{ such that }(\id \times f)_*\nu_\Ga =\nu_{u_\rho} .\ee 
Hence if we write $\Omega_\rho:=\Omega_{\Ga_\rho}=\{[g]\in \Ga_\rho\ba G: g^{\pm}\in \La_\rho\}$, we get the following main theorem of this section:
\begin{theorem}\label{ttt} 
Suppose that $\Gr$ is Zariski dense. Then there exists an $(\id \times f)_*\nu_\Ga$-conull subset
$$\La_{\rho}'\subset \La_\rho$$ such that
for any $g\in G$ with $g^+\in \La_{\rho}'$, the closure $\overline{[g]A_{u_{\rho}}^+}$ contains
$\Omega_\rho$.
\end{theorem}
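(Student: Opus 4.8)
The plan is to combine the classification of conformal measures (Theorem \ref{bi}), the ergodicity of the directional flow $A_{u_\rho}$ (Theorem \ref{er}), and the identification of the graph-conformal measure $(\id\times f)_*\nu_\Ga$ with the conformal measure $\nu_{u_\rho}$ coming from \eqref{urho}, which is in turn legitimate by Lemma \ref{lem.pushforward}. First I would observe that by Lemma \ref{lem.pushforward} the measure $(\id\times f)_*\nu_\Ga$ is a $(\Gr,\sigma_1)$-conformal measure supported on $\La_\rho$, so by the first assertion of Theorem \ref{bi} it equals $\nu_{u_\rho}$ for a unique unit vector $u_\rho\in\inte\L_\rho$; this is exactly the content recorded in \eqref{urho}. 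Hence the associated Bowen-Margulis-Sullivan measure $m^{\BMS}_{u_\rho}$ on $\Gr\ba G$ has support $\Omega_\rho$ (as stated after \eqref{eqn.bms}), and by Theorem \ref{er} it is $A_{u_\rho}$-ergodic; consequently, for $m^{\BMS}_{u_\rho}$-almost all $x\in\Gr\ba G$, the forward orbit $xA_{u_\rho}^+$ is dense in $\Omega_\rho$.

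The next step is to transfer this "almost all $x$" statement into an "almost all $g^+$" statement. Using the Hopf parametrization $G/M\simeq\F^{(2)}\times\fa$ and the explicit product form \eqref{eqn.bms} of $\tilde m^{\BMS}_{u_\rho}$, the BMS measure is, up to the $M$-fibration and the equivalences of measures defining conformality, locally of the form $\nu_{u_\rho}(dg^+)\otimes\nu_{u_\rho}(dg^-)\otimes db$ with a strictly positive continuous density. Let $E\subset\Gr\ba G$ be the $A_{u_\rho}^+$-invariant conull set of points with dense forward orbit; lifting to $G$ and using that the defining density is bounded away from $0$ and $\infty$ on compact pieces, the set $\tilde E$ of lifts is conull for the product measure, so by Fubini there is a $\nu_{u_\rho}$-conull set $\La_\rho'\subset\La_\rho$ of first coordinates $g^+$ such that for $\nu_{u_\rho}$-a.e. $g^-$ and a.e. $b$ the point $(g^+,g^-,b)$ lies in $\tilde E$. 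Since having dense forward $A_{u_\rho}^+$-orbit in $\Omega_\rho$ is preserved under the right $A$-action (which only translates the $b$-coordinate) and under the right $M$-action (which does not change $g^\pm$ and commutes with $A$ up to conjugation fixing $\inte\fa^+$), and since $\overline{[g]A_{u_\rho}^+}\supset\Omega_\rho$ only depends on the pair $(g^+,g^-)$ after this averaging, one upgrades this to: for every $g\in G$ with $g^+\in\La_\rho'$ one has $\overline{[g]A_{u_\rho}^+}\supset\Omega_\rho$. Recalling that $\nu_{u_\rho}=(\id\times f)_*\nu_\Ga$, the set $\La_\rho'$ is $(\id\times f)_*\nu_\Ga$-conull, as required.

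The main obstacle I anticipate is precisely the last upgrade — passing from "for $\nu_{u_\rho}$-a.e. $g^-$ and a.e. $b$" to "for every $g$ with $g^+\in\La_\rho'$". The transverse ergodicity argument only gives density of $[g]A_{u_\rho}^+$ for $g^-$ in a conull (hence in particular nonempty, indeed dense) set, so one must argue that density of the forward orbit is a sufficiently robust property: if $[g_k]\to[g]$ with $[g_k]A_{u_\rho}^+$ dense in $\Omega_\rho$ and $[g]\in\Omega_\rho$, then $[g]A_{u_\rho}^+$ is dense too. This continuity/closing-type statement should follow from the Anosov property of $\Gr$ together with the shadowing behaviour of $A_{u_\rho}^+$ on $\Omega_\rho$ (a vector $u_\rho$ in the interior of the limit cone $\L_\rho$ giving genuinely "expanding" dynamics in all $\F$-directions present in $\La_\rho$); I would isolate it as the key lemma and prove it by a standard limiting argument, choosing for each target point of $\Omega_\rho$ a time $t$ realizing approximate recurrence for $[g_k]$ and letting $k\to\infty$. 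The rest — measure-equivalence bookkeeping along the $M$- and $A$-directions, and the Fubini step — is routine given the structure of \eqref{eqn.bms}.
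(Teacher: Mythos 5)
Your first two steps match the paper: the identification $(\id\times f)_*\nu_\Ga=\nu_{u_\rho}$ via Lemma \ref{lem.pushforward} and Theorem \ref{bi}, the local product structure \eqref{eqn.bms} of $\tilde m^{\BMS}_{u_\rho}$, ergodicity from Theorem \ref{er}, and a Fubini argument producing a $\nu_{u_\rho}$-conull set $\La_\rho'$ of forward endpoints $\xi$ for which \emph{some} $g_0$ with $g_0^+=\xi$ has $[g_0]A_{u_\rho}^+$ dense in $\Omega_\rho$. The gap is in your final upgrade to ``every $g$ with $g^+\in\La_\rho'$''. The $AM$-invariance you invoke only shows the property depends on the pair $(g^+,g^-)$, so Fubini still leaves you with ``a.e.\ $g^-$'' for each good $g^+$; to cover an arbitrary $g^-$ (including $g^-\notin\La_\rho$) you fall back on a claimed robustness statement: if $[g_k]\to[g]$ with each $[g_k]A_{u_\rho}^+$ dense, then $[g]A_{u_\rho}^+$ is dense. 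That statement is false in this kind of hyperbolic setting (already for the geodesic flow on a compact hyperbolic surface, periodic points are limits of points with dense forward orbits), and your proposed proof fails for the standard reason: the approximate-recurrence times $t_k$ for $[g_k]$ need not stay bounded as $k\to\infty$, so there is no equicontinuity allowing you to pass to the limit along them.

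The correct mechanism, which is what the paper uses (Lemma \ref{den}, quoted from \cite[Corollary 2.3]{kim2022rigidity}), is not a limiting argument but a weak-stable-leaf argument: if $g^+=g_0^+$ then $g=g_0p$ with $p=man\in P=MAN$, and since $u_\rho\in\inte\fa^+$ the conjugates $a_{tu_\rho}^{-1}na_{tu_\rho}$ tend to $e$ as $t\to+\infty$; hence $[g]a_{tu_\rho}$ is asymptotic to $[g_0]a_{tu_\rho}\,ma$ for a fixed $ma\in MA$, and since $\Omega_\rho$ is right $AM$-invariant, density of $[g_0]A_{u_\rho}^+$ in $\Omega_\rho$ forces $\overline{[g]A_{u_\rho}^+}\supset\Omega_\rho$. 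Note also that for arbitrary $g^-$ the conclusion is only that the closure \emph{contains} $\Omega_\rho$ (as in the statement of Theorem \ref{ttt}), not that the orbit is dense in it --- another sign that a ``dense orbits pass to limits'' lemma is the wrong tool. With Lemma \ref{den} substituted for your key lemma, the rest of your argument is essentially the paper's proof.
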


\begin{proof} Since $\tilde m^{\BMS}_{u_\rho}$ is equivalent to the product measure $d\nu_{u_\rho}\times d\nu_{u_\rho}\times da\times  dm  $ where $da$ and $ dm$ denote Haar measures on $A$ and $M$ respectively, it follows from
 Theorem \ref{er} that there exists a
$\nu_{u_\rho}$-conull subset $\La_\rho'\subset \La_\rho$ such that 
for all $\xi \in \La_{\rho}'$, there exists $g_0 \in G$ with $g_0^+ = \xi$ and $g_0^{-} \in \La_{\rho}$ such that $[g_0]A_{u_{\rho}}^+$ is dense in $\Omega_{\rho}$.
Hence the claim follows by the following Lemma \ref{den}.
\end{proof}

\begin{lem}\label{den} Let $u\in \inte \fa^+$ and $\D<G$ be a Zariski dense discrete subgroup.
If $[g_0]A_u^+$ is dense in $\Omega_\D$, then for any $g\in G$ with $g^+=g_0^+$, the closure $\overline{[g]A_u^+}$ contains $ \Omega_\D$. 
\end{lem}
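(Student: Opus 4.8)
The plan is to exploit the fact that two points $g^+ = g_0^+$ lie on the same stable horosphere, so the forward flow orbits $[g]A_u^+$ and $[g_0]A_u^+$ shadow each other, and density of one closure forces the other closure to be large. Concretely, if $g^+ = g_0^+$ then there exists $h \in N^+$ with $g = g_0 h$ (after adjusting by an element of $MA$, which commutes with $A$ and hence does not affect forward orbit closures in $\Delta\ba G$; more precisely, write $g = g_0 p$ with $p \in P^+ = MAN^+$ and absorb the $MA$-part). Since $\log N^+$ is the sum of the negative root subspaces for $A^+$ and $u \in \inte\fa^+$, conjugation by $a_{tu}$ contracts $N^+$: for $h \in N^+$ we have $a_{tu} h a_{tu}^{-1} \to e$ as $t \to +\infty$. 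Therefore $[g]a_{tu} = [g_0 h a_{tu}] = [g_0 a_{tu} (a_{tu}^{-1} h a_{tu})]$, and the correction term $a_{tu}^{-1} h a_{tu} \to e$.

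The key steps, in order, are: (i) reduce to the case $g = g_0 h$ with $h \in N^+$, using that the extra $MA$-factor commutes with $A_u$; (ii) fix any point $x \in \Omega_\Delta$; by density of $[g_0]A_u^+$ there is a sequence $t_k \to +\infty$ with $[g_0]a_{t_k u} \to x$; (iii) since $a_{t_k u}^{-1} h a_{t_k u} \to e$, conclude $[g]a_{t_k u} = [g_0]a_{t_k u}\cdot (a_{t_k u}^{-1} h a_{t_k u}) \to x \cdot e = x$; (iv) since $x \in \Omega_\Delta$ was arbitrary, $\overline{[g]A_u^+} \supseteq \Omega_\Delta$. For step (i) one should note that $g_0 A_u^+$ and $g_0 m a A_u^+$ have the same closure in $\Delta \backslash G$ whenever $m \in M$, $a \in A$, since $ma$ commutes with $a_{tu}$ and right-translation by the fixed element $ma$ is a homeomorphism; thus replacing $g_0$ by $g_0 ma$ (to match the $MA$-component of $g$) is harmless, and one is left with the purely unipotent correction.

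The main obstacle is the bookkeeping in step (i): the decomposition $g = g_0 p$ with $p \in P^+$ requires $g^+ = g_0^+$, i.e. $gP = g_0 P$ — but one must be careful that $g^+ = g_0^+$ is the statement $g_0^{-1} g \in P$, whereas the contraction argument needs $g_0^{-1} g \in P^+$; these are different parabolics. The fix is that $g^+ \in \F = G/P$ equals $g_0^+$ means $g_0^{-1} g \in P = MAN^-$, and it is the \emph{unstable} horospherical $N^-$ that is contracted by $a_{tu}$ for $t \to +\infty$ under the convention in the excerpt (where $\log N^- = \log N$ is the sum of the \emph{positive} root subspaces, so $a_{tu} n a_{tu}^{-1} \to e$ for $n \in N^-$ as $t\to+\infty$). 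So in fact the decomposition needed is exactly $g = g_0 p$ with $p \in P = MAN$, and the contracted correction term lives in $N = N^-$, which is consistent. Once the conventions are aligned this is routine; the substance of the lemma is entirely the contraction property of $A_u^+$ on the horospherical subgroup defining the common boundary point, together with invariance of forward orbit closures under the centralizing $MA$-direction. I expect the write-up to be short, on the order of half a page, and to cite \cite[Corollary 2.3]{kim2022rigidity} for the parallel statement in the rank-one-factor setting if a streamlined argument is desired.
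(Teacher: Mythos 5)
Your argument is correct and is exactly the standard horospherical-contraction proof that the cited \cite[Corollary 2.3]{kim2022rigidity} carries out; the paper itself gives no independent proof, so there is nothing different to compare against. Your self-correction in the last paragraph is the right one: $g^+=g_0^+$ means $g_0^{-1}g\in P=MAN^-$, and the relevant correction term $a_{tu}^{-1}na_{tu}$ (for $n\in N^-$, whose Lie algebra lies in the \emph{positive} root spaces) tends to $e$ as $t\to+\infty$. Two small points to fix in the write-up: the parenthetical ``$a_{tu}na_{tu}^{-1}\to e$ for $n\in N^-$'' has the conjugation in the wrong order (that expression expands; it is $a_{tu}^{-1}na_{tu}$ that contracts, which is fortunately the term that actually appears in $[g_0na_{tu}]=[g_0a_{tu}(a_{tu}^{-1}na_{tu})]$); and the reduction replacing $g_0$ by $g_0ma$ preserves ``closure contains $\Omega_\Delta$'' only because $\Omega_\Delta$ is right $MA$-invariant ($h^{\pm}$ are unchanged under right multiplication by $MA$), which should be stated explicitly since a priori the closure is only translated to $\Omega_\Delta\cdot ma$.
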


\begin{proof} This can be deduced from the proof of \cite[Corollary 2.3]{kim2022rigidity}, which we recall for readers' convenience.
    Since $g^+ = g_0^+$, $g = g_0 p$ for some $p \in P$. Writing $p = nam \in NAM$, we claim that
    $$
    (\Omega_{\Delta} - [g_0] A_u^+) ma \subset \overline{[g]A_u^+}.
    $$
    Let $x\in \Omega_{\Delta}-[g_0] A_u^+$. Since $\overline{[g_0]A_u^+} \supset \Omega_{\Delta}$, there exists a sequence $t_i\to +\infty$ such that
$x=\lim_{i\to \infty} [g_0] a_{t_i u} $.
Since $u \in \inte \fa^+$,
we have 
$a_{-t_i u} n a_{t_i u}\to e$ as $i\to \infty$.
Therefore
$$\lim_{i\to \infty} [g] a_{t_i u}= \lim_{i\to \infty} [g_0] nam a_{t_iu}=
\lim_{i\to \infty} [g_0] a_{t_iu} (a_{-t_i u} n a_{t_i u})  am
=xam ;$$
so $xam\in \overline{[g]A_u^+} $.
This proves the claim.

Since
$\Omega_{\Delta}$ is $AM$-invariant and $\Omega_{\Delta}-[g_0]AM$ is dense in $\Omega_{\Delta}$ (as $\La_{\Delta} \subset \F$ is a perfect subset),
it follows that 
$$ \Omega_{\Delta} \subset \overline{[g]A_u^+}.$$
\end{proof}

\section{Orbits in the space of circle-sphere pairs} \label{sec.orbit}

Let  $G_1=\so(n+1,1)$, $n\ge 2$ and $G_2=\so(m+1,1)$, $m\ge 2$.
We set
$$\Upsilon=\{ Y=(C, S): \text{$C\subset \S^n$ a circle, $S\subset \S^m$ a codimension one
sphere}\}.$$
Let $G=G_1\times G_2$.
The group $G$ acts on $\Upsilon$ componentwise: 
$$(g_1, g_2)(C, S)= (g_1C, g_2 S)$$
for $(g_1, g_2)\in G_1\times G_2$ and $(C, S)\in \Upsilon$.
Let $\Delta < G$ be a Zariski dense
discrete subgroup. Then $\Delta$ acts on the space
$$
 \Upsilon_{\Delta} = \{Y\in \Upsilon : Y \cap \La_{\Delta} \neq \emptyset\}, 
 $$
which is a closed subset of $\Upsilon$.

\subsection*{Denseness of $\yD^*$}
Let $$\yD^*:=\{Y\in \yD: \# Y\cap \La_\D \ge 2\}.$$
\begin{theorem}\label{two}
The subset
$\yD^*$ is dense in  $\yD $.
\end{theorem}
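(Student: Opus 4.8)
The plan is to deduce denseness of $\yD^*$ in $\yD$ from two inputs: the minimality of the $\D$-action on the limit set $\La_\D \subset \F = \F_1 \times \F_2$ (Benoist), together with Zariski density of $\D$, and a transitivity statement for the $G$-action on $\Upsilon$. First I would set up the relevant fibrations. A circle $C \subset \S^n$ is determined by an unordered pair of distinct points in $\S^n$ together with the requirement that $C$ pass through them --- more precisely, the space of circles through two prescribed distinct points $\xi_1 \neq \eta_1$ in $\S^n = \F_1$ is a positive-dimensional variety, and any two distinct points lie on a common circle. Likewise, any two distinct points of $\S^m = \F_2$ lie on a common codimension-one sphere, and the space of such spheres through a fixed pair is positive-dimensional. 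So if $(\xi, \eta) = ((\xi_1,\xi_2),(\eta_1,\eta_2)) \in \F^{(2)}$ is any pair in general position, there exists $Y = (C,S) \in \Upsilon$ with $\xi_1, \eta_1 \in C$ and $\xi_2, \eta_2 \in S$; such $Y$ satisfies $\#(Y \cap \F) \geq 2$ in the obvious componentwise sense.

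Next I would exploit that $\La_\D$ is the unique $\D$-minimal subset of $\F$ and, by Zariski density, is not contained in any proper subvariety; in particular it contains a pair $(\xi, \eta) \in \F^{(2)}$ in general position (Benoist's work, or directly: loxodromic elements of $\D$ have attracting/repelling fixed points in $\La_\D$ in general position). Fix such a pair and a circle-sphere pair $Y_0 = (C_0, S_0)$ through it as above; then $Y_0 \in \yD^*$, so $\yD^*$ is nonempty. To get denseness, take an arbitrary $Y = (C,S) \in \yD$, so $Y \cap \La_\D \neq \emptyset$, say $\zeta = (\zeta_1,\zeta_2) \in (C \times S) \cap \La_\D$. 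I want to approximate $Y$ by elements of $\yD^*$. The idea is: by minimality of the $\D$-action, the $\D$-orbit of the pair $(\xi,\eta)$ fixed above is dense in the appropriate space of pairs inside $\La_\D \times \La_\D$; and using the transitivity of $G$ (hence approximate transitivity of $\D$ via minimality) on configurations, one can move $Y_0$ by elements of $\D$, or rather by elements of $G$ tracked by a density argument, so that its underlying circle and sphere are close to $C$ and $S$ respectively while still passing through (at least two) points of $\La_\D$ near $\zeta$. Concretely, since the map sending $g \in G$ to $gY_0 \in \Upsilon$ is continuous with image a $G$-orbit that is dense (indeed all of $\Upsilon$ if $G$ acts transitively on the relevant component, or at least meets every $\yD$-neighborhood), and since $g Y_0 \in \yD^*$ whenever $g$ carries the two marked limit points of $Y_0$ to two points still in $\La_\D$, the minimality of $\La_\D$ lets us find such $g$ with $gY_0$ arbitrarily close to $Y$.

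The main obstacle, and the step I would spend the most care on, is the last one: making precise that one can simultaneously (i) bring the circle-sphere pair close to the target $Y$ in $\Upsilon$ and (ii) keep at least two intersection points inside $\La_\D$. The clean way is to work with the $\D$-action directly: fix a loxodromic $\gamma \in \D$ with attracting fixed point $y_\gamma^+ \in \La_\D$ and repelling fixed point $y_\gamma^- \in \La_\D$, both in $\F^{(2)}$. For a generic $Y = (C,S) \in \yD$ containing a limit point $\zeta \in \La_\D$, choose by minimality a sequence $g_k \in \D$ with $g_k^{-1} \zeta \to y_\gamma^+$ and (using a second limit point or a perturbation) a second sequence of points $g_k^{-1}\zeta_k' \to y_\gamma^-$; then $g_k^{-1} Y$ is a circle-sphere pair whose intersection with $\La_\D$ accumulates on $\{y_\gamma^+, y_\gamma^-\}$, and applying $\gamma^{N}$ for large $N$ and then $g_k$ back produces, by a north-south-dynamics contraction estimate on $\Upsilon$, a pair in $\yD^*$ converging to $Y$. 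The delicate points are: the existence of the second approximating limit point near the first (handled by minimality of $\La_\D$ in $\F$, whose projections to $\F_i$ are the limit sets of $\Ga$ and $\rho(\Ga)$, both infinite), and uniformity of the convergence in $\Upsilon$, which I would control using the explicit description of $\Upsilon$ as a homogeneous bundle over $\F_1 \times \F_1$ and $\F_2 \times \F_2$. I expect the argument to be short once these are in place, and I would present it via the loxodromic-element contraction picture rather than an abstract transitivity statement, since that keeps the two-intersection-points condition transparent throughout.
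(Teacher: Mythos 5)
Your proposal has a genuine gap, and it sits exactly where you flagged it: producing a \emph{second} intersection point with $\La_\D$ on a small perturbation of an arbitrary $Y=(C,S)\in\yD$. The difficulty is that $Y$ may meet $\La_\D$ in exactly one point $\xi$, and this cannot be fixed by moving $Y$ with group elements: since $\La_\D$ is $\D$-invariant, every translate $\ga Y$ with $\ga\in\D$ meets $\La_\D$ in the same number of points as $Y$, so your ``pull back by $g_k^{-1}$, contract by $\gamma^N$, push forward by $g_k$'' scheme never leaves the $\D$-orbit of $Y$ and never lands in $\yD^*$. The alternative reading of your argument --- approximate $Y$ by $\D$-translates of a fixed $Y_0\in\yD^*$ through the fixed points of a loxodromic --- requires that the single orbit $\D Y_0$ be dense in $\yD$, which is far stronger than minimality of $\D$ on $\La_\D$; indeed the paper proves such orbit-density statements only later (Theorem \ref{ett}), using Theorem \ref{two} itself together with the ergodicity of directional flows, and Theorem \ref{thm.wKM} shows that some $\D$-orbits in $\Upsilon_\rho$ are \emph{not} dense. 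So that route is either unjustified or circular.

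The missing idea is to perturb $C$ and $S$ themselves so that they pass through two limit points near $\xi$ while still converging to $(C,S)$. For the circle through two nearby limit points (and a fixed anchor point of $C$) to converge to $C$, the direction of the chord joining them must converge to the tangent direction of $C$ at $\xi_1$, and similarly for $S$; so one needs pairs of limit points approaching $\xi$ tangentially to an \emph{arbitrarily prescribed} direction. Minimality of $\D$ on $\La_\D$ does not give this --- it says nothing about directions of approach. The paper obtains it from the Guivarch--Raugi theorem (Theorem \ref{GR}): the $\D$-action on the lift $\tilde\La_\D\subset K$ of the limit set to the frame bundle is minimal. Starting from one frame admitting a tangential approximating sequence (produced, as you do, from a loxodromic element), $\D$-invariance of this property plus frame-bundle minimality makes such frames dense in $\tilde\La_\D$, which is exactly what is needed to build the approximating circles $C_k$ and spheres $S_k$ through two limit points. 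Without this input your argument does not close.
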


Recalling that $P=MAN$ and $\F=G/P\simeq K/M$,
we have $G/AN\simeq K$.
Consider the projection $\pi:G/AN=K\to G/P=K/M$, and set
$$\tilde \La_{\Delta}=\pi^{-1} (\La_{\Delta})\subset G/AN=K.$$
Since $M\simeq \SO(n)\times \SO(m)$ is connected,
the following is a special case of a theorem of Guivarch and Raugi
\cite{Guivarch2007actions}:
\begin{theorem}[{\cite[Theorem 2]{Guivarch2007actions}}] \label{GR} The action of $\Delta$ on $\tilde \La_\Delta$ is minimal.
\end{theorem}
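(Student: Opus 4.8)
The plan is to work with the identification $K\cong G/AN$ coming from the Iwasawa decomposition $G=KAN$, under which the projection $\pi:K\to K/M=\F$ becomes the $G$-equivariant fibration $G/AN\to G/P=\F$. Then $\Delta$ acts on $K\cong G/AN$ by left translation, this action commutes with the right $M$-action $gAN\mapsto gmAN$ (well defined since $M$ normalizes $AN$), and $\tilde\La_\Delta=\pi^{-1}(\La_\Delta)$ is the compact, $\Delta$-invariant, $M$-saturated total space of the restriction to $\La_\Delta$ of this principal $M$-bundle. Minimality of the $\Delta$-action means every nonempty closed $\Delta$-invariant $F\subseteq\tilde\La_\Delta$ equals $\tilde\La_\Delta$, so I fix such an $F$. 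Since $\pi$ is continuous and $\Delta$-equivariant and $F$ is compact, $\pi(F)$ is a nonempty closed $\Delta$-invariant subset of $\La_\Delta$; as $\La_\Delta$ is the unique $\Delta$-minimal subset of $\F$ (Benoist), $\pi(F)=\La_\Delta$, i.e.\ $F$ meets every fiber.

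Next I would exploit the loxodromic dynamics in the fibers. For a loxodromic $\gamma=g\,a\,m_\gamma\,g^{-1}\in\Delta$ with $a\in\inte\fa^+$ and $m_\gamma\in M$, the attracting fixed point is $\xi_\gamma^+=gP\in\La_\Delta$, and a direct computation in the coordinate $m'\mapsto gm'AN$ of the fiber $\pi^{-1}(\xi_\gamma^+)\cong M$ (using that $M$ centralizes $A$) shows that $\gamma$ preserves this fiber and acts on it by $\phi(m')\mapsto\phi(m_\gamma m')$. Choosing a point $p=\phi(m')\in F$ in this fiber (possible by the previous paragraph), $\Delta$-invariance and closedness give $\overline{\{\gamma^k p:k\ge 0\}}=\phi\big(\overline{\langle m_\gamma\rangle}\,m'\big)\subseteq F$, where $\overline{\langle m_\gamma\rangle}$ is the monothetic closed subgroup of $M$ generated by $m_\gamma$; thus over every loxodromic attracting point the fiber of $F$ contains a full coset of $\overline{\langle m_\gamma\rangle}$. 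In parallel I would record the structural subgroup $H:=\{m\in M:Fm=F\}$: since the left $\Delta$- and right $M$-actions commute, each translate $Fm$ is again closed, $\Delta$-invariant and $\Delta$-minimal, so $F\cap Fm$ is either empty or all of $F$; hence distinct translates are disjoint and $\tilde\La_\Delta=FM=\bigsqcup_{mH\in M/H}Fm$, each fiber $F\cap\pi^{-1}(\xi)$ being right-$H$-invariant.

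The crux, and the step I expect to be the main obstacle, is to upgrade these fiberwise inclusions to $H=M$, equivalently to the $M$-invariance of $F$ and hence $F=\tilde\La_\Delta$. Two ingredients combine. First, a transport argument: since $\Delta$ acts minimally on $\La_\Delta$ and loxodromic attracting points are dense in $\La_\Delta$, the ``large fiber'' information of the previous paragraph can be carried by the $\Delta$-action to every fiber, forcing each fiber of $F$ to be a single coset of a fixed closed subgroup and identifying that subgroup with $H$ (up to the unavoidable base-point ambiguity inherent in trivializing the bundle). Second, and decisively, one needs that the $M$-components $\{m_\gamma\}$ of loxodromic elements generate a dense subgroup of $M$; this is exactly Benoist's density theorem for Zariski dense subgroups, and it is here that the hypothesis that $M\cong\SO(n)\times\SO(m)$ is connected is essential — a connected compact $M$ has no proper closed finite-index subgroup, so Benoist's conclusion yields all of $M$ rather than a proper closed subgroup, whereas the disjoint decomposition $\tilde\La_\Delta=\bigsqcup_{mH}Fm$ would be compatible with a proper $H$ of positive codimension and connectedness alone does not exclude it. Feeding this density into the coset structure gives $H=M$, so $F$ is $M$-invariant; combined with $\pi(F)=\La_\Delta$ this yields $F=\tilde\La_\Delta$, proving minimality. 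The delicate point is entirely in reconciling the conjugation ambiguities when transporting coset data between different fibers, which is why isolating the correct conjugation-invariant formulation of the density input is the heart of the argument.
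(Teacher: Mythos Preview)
The paper does not give its own proof of this statement: Theorem \ref{GR} is simply quoted from Guivarc'h--Raugi \cite[Theorem~2]{Guivarch2007actions} and used as a black box in the proof of Theorem \ref{two}. The paper's only contribution is the one-line observation preceding the statement that $M\simeq\SO(n)\times\SO(m)$ is connected, which is the hypothesis under which the cited result applies. There is therefore no ``paper's own proof'' against which to compare your proposal.

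That said, your sketch follows the standard strategy for results of this type (minimality on the base $\La_\Delta$, then Benoist's density of the $M$-parts of loxodromic elements to fill out the fibers), and you have correctly isolated the genuinely delicate step: the dynamics of a loxodromic $\gamma$ on its attracting fiber is by \emph{left} translation by $m_\gamma$ in your trivialization $m'\mapsto gm'AN$, whereas the stabilizer $H=\{m\in M:Fm=F\}$ is defined via the \emph{right} $M$-action; passing from the former to control of the latter, uniformly over all fibers, is where the real work lies and is exactly what you flag as the ``conjugation ambiguity''. Your outline is incomplete precisely there --- as you yourself acknowledge --- but the skeleton is sound and is essentially what Guivarc'h--Raugi carry out in the cited reference.
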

Indeed, this theorem is a key ingredient of the proof of Theorem \ref{two}, which we now begin.

\medskip

\noindent{\bf Proof of Theorem \ref{two}.}
For simplicity, we write $\La$ for $\La_\Delta$ in this proof.
Write $K=K_1\times K_2$ where $K_1=K\cap (G_1\times \{e\})=\SO(n+1)$
and $K_2=K\cap (\{e\}\times G_2)=\SO(m+1)$, and similarly, we write $M=M_1\times M_2=\SO(n)\times \SO(m)$.
Via the projection $K_i\to K_i/M_i=\F_i$, we can think of a point of $K_i$
as an orthonormal frame $\mathsf f_\xi$ based at $\xi\in \F_i$. Hence an element of $K$ is a pair of orthonormal frames $(\mathsf f_{\xi_1}, \mathsf f_{\xi_2})\in K_1\times K_2$.
For an infinite sequence $(\xi_{1, j}, \xi_{2, j}) \in \F_1\times \F_2$ converging to $(\xi_1, \xi_2)$, we say that the convergence is
  $(1, 1)$-tangential to the frame $(\mathsf f_{\xi_1}, \mathsf f_{\xi_2})$  if, for each $i = 1, 2$,
the sequence of unit vectors ${\overrightarrow{\xi_i \xi_{i, j}} \over \lVert \overrightarrow{\xi_i \xi_{i, j}} \rVert}$ at $\xi_i$ converges to the first vector of the frame $\mathsf{f}_{\xi_{i}}$
as $j \to \infty$.

Let $$\E = \left\{ (\mathsf f_{\xi_1}, \mathsf f_{\xi_2}) \in \tilde  \La : \begin{matrix}
\text{there exists a sequence }
 (\xi_{1, j}, \xi_{2, j}) \in \Lambda \\ \mbox{converging to } (\mathsf f_{\xi_1}, \mathsf f_{\xi_2}) \ (1, 1)\mbox{-tangentially}
 \end{matrix} \right\}.$$
 We first note that $\E$ is non-empty. Since $\Delta$ is Zariski dense in $G$ ,
 $\Delta$ contains a loxodromic element, say, $g \in \Delta$.
 Denote by $y_g\in \F$ the attracting fixed point of $g$. 
 Choose $\zeta \in \La$ which is in general position with $y_{g^{\pm 1}}$. Then  the sequence $g^{\ell} \zeta $ converges to $ y_g$ as $\ell \to +\infty$.  The claim follows from the compactness of the unit sphere in the tangent space of $\F$ at $y_g$.
 
 On the other hand, since
 the action of $G$ on $\F$ is conformal and $\La$ is $\D$-invariant,
 $\E$ is a $\Delta$-invariant subset of $\tilde  \La$. Hence by Theorem \ref{GR},  $$\overline{\E}=\tilde \La.$$

Let $Y = (C, S) \in \Upsilon_{\Delta}$.  We will construct a sequence $Y_k \in \Upsilon_{\Delta}^{*}$ converging to $Y$ as $k \to \infty$. 
Choose $\xi = (\xi_1, \xi_2) \in Y \cap \La$.
Choose a unit vector $\mathsf v_1$ at $\xi_1$ tangent to $C$
and a unit vector $\mathsf v_2$ at $\xi_2$  tangent to $S$. For each $i = 1, 2$, choose an orthonormal frame $\mathsf f_{\xi_i}$ in $\F_i$ based at $\xi_i$ whose first vector is $\mathsf v_i$. Since $(\mathsf f_{\xi_1}, \mathsf f_{\xi_2}) \in \tilde \La$ and
$\cal E$ is dense in $\tilde  \La$, we can find a sequence $(\mathsf f_{\eta_{1, k}}, \mathsf f_{\eta_{2, k}}) \in \cal E$ converging to $(\mathsf f_{\xi_1}, \mathsf f_{\xi_2})$ as $k \to \infty$.
Hence, for each $k$, there exists a sequence $\{(\eta^{(k)}_{1, j}, \eta^{(k)}_{2, j}) \in \La:j=1,2, \cdots\}$ converging $(1, 1)$-tangentially to $(\mathsf f_{\eta_{1, k}}, \mathsf f_{\eta_{2, k}})$ as $j \to \infty$. Since $(\mathsf f_{\eta_{1, k}}, \mathsf f_{\eta_{2, k}}) \to (\mathsf f_{\xi_1}, \mathsf f_{\xi_2})$ as $k \to \infty$, we can choose large enough $j_k$ for each $k$  so that the following holds
for each $i = 1, 2$:
\begin{enumerate}
    \item $ \eta_{i, j_k}^{(k)} \to \xi_i$ as $k \to \infty$; and
    \item the unit tangent vector ${\overrightarrow{\eta_{i, k} \eta_{i, j_k}^{(k)}} \over \lVert \overrightarrow{\eta_{i, k} \eta_{i, j_k}^{(k)}}\rVert}$ at $\eta_{i, k}$ converges to $\mathsf v_i$ as $k \to \infty$.
\end{enumerate}

Now we are ready to construct a sequence $Y_k=(C_k, S_k) \in \Upsilon_{\Delta}^{*}$:
\begin{enumerate}
    \item Fix $z_1 \in C - \{\xi_1 \}$ and let $C_k$ be the circle passing through $z_1$, $\eta_{1, k}$ and $ \eta_{1, j_k}^{(k)}$.
    \item Fix $z_2 \in S - \{ \xi_2 \}$. 
    The tangent space $\mathsf{T}_{\xi_2} S$ of $S$ at $\xi_2$ is a codimension one subspace of the tangent space $\mathsf{T}_{\xi_2}\F_2$. Noting that $\mathsf v_2 \in \mathsf{T}_{\xi_2} S $, we can choose unit tangent vectors $\mathsf w_1, \cdots, \mathsf w_{m-2} \in \mathsf{T}_{\xi_2}S$ so that $\mathsf v_2, \mathsf w_1, \cdots, \mathsf w_{m-2}$ form a basis of $\mathsf{T}_{\xi_2} S$.
For each $\ell = 1, \cdots, m - 2$, we choose a sequence $\zeta_{\ell, k} \in \F_2$ converging to $\xi_2$ such that 
     the unit vectors ${\overrightarrow{\eta_{2, k} \zeta_{\ell, k}} \over \lVert \overrightarrow{\eta_{2, k} \zeta_{\ell, k}} \rVert }$  converges to $\mathsf w_{\ell}$ as $k \to~\infty$.
     Then for each $k\ge 1$ large enough, 
the set $$\{ z_2, \eta_{2, k},
\eta_{2, j_k}^{(k)}, \zeta_{1, k}, \cdots, \zeta_{m - 2, k}\}$$ has cardinality $(m+1)$ and hence  uniquely determines an $(m-1)$-dimensional sphere in $\F_2=\S^m$, which we set to be $S_k$.
\end{enumerate}
Since $(C_k, S_k) \cap \La$ contains two distinct points
$(\eta_{1, k}, \eta_{2, k})$ and $  ( \eta_{1, j_k}^{(k)},  \eta_{2, j_k}^{(k)})$, we have
$$(C_k, S_k) \in \Upsilon_{\Delta}^{*}.$$ Moreover, as $k \to \infty$, $C_k$ converges to the unique circle passing through $z_1$ and tangent to $\mathsf v_1$ which must be $C$, and $S_k$ converges to the unique sphere passing through $z_2$ and whose tangent space at $\xi_2$ is same as $\mathsf{T}_{\xi_2}S$, which must be $S$. Therefore $(C_k, S_k)\in \Upsilon_{\Delta}^{*}$ converges to $Y = (C, S)$.
This finishes the proof of Theorem \ref{two}.

\subsection*{Dense orbits}
Let $\G<\so(n+1,1)$ be a convex cocompact subgroup where $n \ge 2$. Then
$\nu_\Ga$ is equal to $\delta_\Ga$-dimensional Hausdorff measure $\cal H^{\delta_\Ga}|_\La$ and $\delta:=\delta_\Ga$ is equal to
the Hausdorff dimension of $\La$ by \cite{Sullivan1979density}.
Let $\rho:\G\to \so(m+1,1)$ be a Zariski dense convex cocompact faithful representation. Let $\Ga_\rho:=(\id\times \rho)(\Ga)<G$ and 
\be \label{eqn.def of Upsilon}
\Upsilon_{\rho} := \Upsilon_{\Gr}  = \{Y=(C,S)
\in \Upsilon : Y \cap \La_\rho \neq \emptyset\}.
\ee
\begin{theorem} \label{ett}
Suppose that $\Ga_\rho$ is Zariski dense.
Then there exists a  $\cal H^{\delta}|_{\La}$-conull $\La'\subset \La$ such that 
for any $Y\in \Upsilon_{\rho}$ intersecting $(\id\times f)(\La')$ non-trivially,
$$\overline{\Gr Y}=\Upsilon_{\rho}.$$ 
\end{theorem}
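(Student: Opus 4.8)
\textbf{Proof plan for Theorem \ref{ett}.}
The plan is to combine the ergodicity input (Theorem \ref{ttt}) with the denseness of singular circle-sphere pairs (Theorem \ref{two}) through a transfer-of-density argument. First I would fix the distinguished direction $u_\rho \in \inte \L_\rho$ from \eqref{urho} and apply Theorem \ref{ttt} to obtain the $(\id \times f)_*\nu_\Ga$-conull subset $\La_\rho' \subset \La_\rho$; pulling back through the homeomorphism $\id \times f: \La \to \La_\rho$ and using Lemma \ref{lem.pushforward} (so that $(\id\times f)_*\nu_\Ga = (\id\times f)_*(\cal H^\delta|_\La)$ up to the identification $\nu_\Ga = \cal H^{\delta_\Ga}|_\La$, with $\delta = \delta_\Ga$), I get a $\cal H^\delta|_\La$-conull subset $\La' \subset \La$ with $(\id\times f)(\La') \subset \La_\rho'$. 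The content of Theorem \ref{ttt} is then: for every $g \in G$ with $g^+ \in (\id\times f)(\La')$, the closure $\overline{[g]A_{u_\rho}^+}$ contains $\Omega_\rho = \{[g] : g^\pm \in \La_\rho\}$.

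The heart of the argument is to translate this denseness statement on the homogeneous space $\Gr\ba G$ into a denseness statement for $\Gr$-orbits on the space $\Upsilon_\rho$ of circle-sphere pairs. The point is that $\Upsilon_\rho$ is a $G$-space on which $G$ acts with a point whose stabilizer contains a conjugate of $AM$ — concretely, a circle $C \subset \S^n$ together with a codimension one sphere $S \subset \S^m$ can be encoded by a pair of points $(\xi^\pm)$ on each factor boundary (endpoints of a geodesic determining $C$, resp. a geodesic whose axis is the "pole" of $S$) plus residual frame/compact data, so that $\Upsilon$ is a quotient of $G/M$ (or of $\F^{(2)}$) by a compact group. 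Thus I would set up an explicit $G$-equivariant surjection $q: \Omega_\rho \to \Upsilon_\rho$ (or from a suitable bundle over $\Omega_\rho$), intertwining the right $A_{u_\rho}^+$-flow with... well, with a right action that becomes trivial on the base, so that $q$ carries closures of $A_{u_\rho}^+$-orbits to single points or to the full fiber image. The cleanest route: given $Y \in \Upsilon_\rho$ meeting $(\id\times f)(\La')$ at a point $\xi$, choose $g_0 \in G$ with $g_0^+ = \xi$, $g_0^- \in \La_\rho$, and with $g_0$ chosen so that $Y = g_0 Y_\ast$ for a fixed reference pair $Y_\ast$ "adapted to $P$"; then $\Gr Y = \Gr g_0 Y_\ast$, and applying Theorem \ref{ttt} to $g_0$ gives $\Gr [g_0] A_{u_\rho}^+ M$ dense in $\Omega_\rho$, hence $\overline{\Gr g_0 A_{u_\rho}^+ M} \supset \Omega_\rho$, and pushing through $q$ (which kills the $A_{u_\rho}^+ M$ and more) yields $\overline{\Gr Y} \supset q(\Omega_\rho)$.

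That already shows $\overline{\Gr Y}$ contains $q(\Omega_\rho)$, and I claim $q(\Omega_\rho) \supset \yD[\Gr]^\ast$, the set of pairs meeting $\La_\rho$ in at least two points: a pair $Y' = (C', S')$ with two distinct intersection points $\zeta, \zeta'$ in $\La_\rho$ is realized as $g' Y_\ast$ where $g'$ may be taken with $(g')^+ = \zeta$, $(g')^- = \zeta'$ both in $\La_\rho$, i.e.\ $[g'] \in \Omega_\rho$. Combining with Theorem \ref{two}, $\overline{\Gr Y} \supset \overline{\yD[\Gr]^\ast} = \Upsilon_\rho$, which is the desired conclusion. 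So the steps are: (i) transport conullity from $\La_\rho$ to $\La$ via $\id\times f$ and Lemma \ref{lem.pushforward}; (ii) build the equivariant map $q$ from $\Omega_\rho$ (or a frame bundle over it) onto $\Upsilon_\rho$, identifying which right subgroup it annihilates and checking it swallows $A_{u_\rho}^+$; (iii) use Theorem \ref{ttt} plus Lemma \ref{den} to get $\Gr$-orbit closures of $A_{u_\rho}^+$-type points containing $\Omega_\rho$; (iv) push forward under $q$ to get $\overline{\Gr Y} \supset q(\Omega_\rho) \supset \yD[\Gr]^\ast$; (v) invoke Theorem \ref{two} to upgrade to all of $\Upsilon_\rho$.

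The main obstacle I anticipate is step (ii): making precise the correspondence between circle-sphere pairs and the homogeneous/bundle picture, and in particular verifying that the right-action direction corresponding to $u_\rho$ (and the compact group $M$) is exactly what degenerates under the map to $\Upsilon_\rho$ — one must be careful that a circle in $\S^n$ needs a $2$-dimensional datum (a geodesic in $\H^{n+1}$) while the codimension one sphere in $\S^m$ is dual to a point/geodesic as well, so the relevant reference stabilizer is a specific conjugate of $AM N^0$ for some subgroup, and the flow $A_{u_\rho}^+$ must lie inside it. A secondary subtlety is ensuring the chosen $g_0$ realizing a given $Y$ can be taken with $g_0^- \in \La_\rho$ (not merely $g_0^+ \in \La_\rho'$): this is where one uses that $Y$ meets $\La_\rho$, plus density of $\La_\rho$ and continuity, exactly as packaged by Lemma \ref{den}, which lets one forget the $g_0^-$ coordinate at the cost of working with orbit closures.
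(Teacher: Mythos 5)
Your proposal is correct and follows essentially the same route as the paper: the paper also identifies $\Upsilon\simeq G/H$ for a reference pair $Y_0$ whose stabilizer $H$ (with $H^\circ$ conjugate to $(\so(2,1)\times\op{SO}(n-1))\times\so(m,1)$) contains $A$ and meets $P$ in a minimal parabolic of $H$, so that Theorem \ref{ttt} pushes forward to give $\overline{\Gr g Y_0}\supset\Upsilon_\rho^*$, and then Theorem \ref{two} together with Lemma \ref{lem.pushforward} and the uniqueness of $\nu_\Ga=\cal H^{\delta}|_\La$ finishes the argument. The homogeneous-space bookkeeping you flag as the main obstacle in step (ii) is exactly the content of the paper's choice of $Y_0$ with $H\supset A$ and $H\cap P$ minimal parabolic in $H$.
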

\begin{proof}
Since $G$ acts transitively on $\Upsilon$ as homeomorphisms,
we have the homeomorphism
$$\Upsilon \simeq G/H$$
where $H=\Stab(Y_0)$ is the stabilizer of some $Y_0=(C_0, S_0)\in  \Upsilon$. Noting that
$H^\circ $ is a semisimple real algebraic subgroup conjugate to $(\so(2,1)\times \op{SO}(n-1))\times \so(m,1) $,
we may choose $Y_0$ so that $H\supset A$ and that $H\cap P$ is a minimal 
parabolic subgroup of $H$.

Recall the subset $\Upsilon_{\rho}^*=\{Y\in \Upsilon_{\rho}: \# Y\cap \La_\rho \ge 2\}$. Let $\tilde \Omega_{\rho}\subset G$ be the preimage of  $\Omega_{\rho}$ for the projection $G\to \Ga_\rho\ba G$. 
Clearly, $\Upsilon_\rho^*\supset \tilde \Omega_\rho Y_0$. In fact, 
we have $\Upsilon_\rho^*=\tilde \Omega_\rho Y_0$. Indeed,
 as $Y_0$ corresponds to $H$,
denoting by $e=(e_1, e_2)\in H$ the identity element, we have $e_1^{\pm}\in C_0$ and $e_2^{\pm}\in S_0$. For any $Y=(C, S) \in \Upsilon_{\rho}^*$,
there exist distinct  $\xi,\eta \in \La\cap C$ such that $f(\xi), f(\eta)\in S$.
We can then find $g_1\in G_1$ such that
 $g_1(C_0)=C$ and $g_1 e_1^+=\xi$
and $g_1e_1^-=\eta$.  Similarly, we can find $g_2\in G_2$ such that
 $g_2(S_0)=S$ and $g_2 e_2^+=f(\xi)$
and $g_2e_2^-=f(\eta)$.
Then $Y = g Y_0$ for $g=(g_1, g_2) \in G$. Since $g^+ = (\xi, f(\xi))$ and $g^-  = (\eta, f(\eta))$, $g\in \tilde \Omega_\rho$. Therefore, $\tilde \Omega_{\rho} Y_0 = \Upsilon_{\rho}^*$.

Suppose that there exists $g\in G$ such that the closure of $[g]A_u^+$ contains $\Omega_{\rho}$ for some $u\in \inte\fa^+$.
Since $A_u^+\subset H$, the closure of $\Ga_\rho g H$ contains $\tilde \Omega_\rho H$, in other words,
the closure of
$\Ga_\rho gY_0$ contains $\tilde \Omega_\rho Y_0=\Upsilon_{\rho}^*$.  Hence by  Theorem \ref{two},
$$\overline{\Ga_\rho gY_0}=\Upsilon_\rho .$$

Since $\G<\so(n+1,1)$ is convex cocompact, we have that $\cal H^{\delta}|_\La$ is
the unique $\Ga$-conformal measure on $\La$, up to a constant multiple \cite{Sullivan1979density}.
Therefore Theorem \ref{ett} follows from Theorem \ref{ttt} and Lemma \ref{lem.pushforward}.
\end{proof}

\section{Doubly stable condition} \label{sec.wKM}
In this section, let $\Ga < \so(n + 1, 1)$ be a discrete group, $n \ge 2$, which is not necessarily convex cocompact.  Let $\La\subset \S^n$ denote its limit set.

We say that a circle $C\subset \S^n$ is $\La$-{\it doubly stable} if 
for any sequence of circles $C_k$ converging to $C$, 
$$\#\limsup (C_k\cap \La) \ge 2.$$
If $\Omega$ is disconnected, 
there exists a $\La$-doubly stable circle (Lemma \ref{lem.doubly}). Recall from \eqref{eqn.def of Upsilon} that $\Upsilon_{\rho} = \{ Y \in \Upsilon : Y \cap \La_{\rho} \neq \emptyset \}$.

\begin{theorem} \label{thm.wKM}
Let $\Ga < \so(n + 1, 1)$ be a discrete subgroup and $\rho:\Ga\to \so(m+1,1)$, $m \ge 2$,  be a discrete faithful representation
with a boundary map $f:\La\to \S^m$.
Assume that there exists at least one $\La$-doubly stable circle.
If $(C_0, S_0)\in \Upsilon_\rho$
 such that $f(C_0 \cap \La) \subset S_0$, then
$$\overline{\Gr(C_0, S_0)}\ne \Upsilon_\rho.$$
\end{theorem}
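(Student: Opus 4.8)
The plan is to argue by contradiction: suppose $\overline{\Gr(C_0,S_0)}=\Upsilon_\rho$ even though $f(C_0\cap\La)\subset S_0$. The strategy is to show that the property ``$f(C\cap\La)$ is contained in the codimension-one sphere $S$'' is preserved under taking limits of pairs $(C,S)\in\Upsilon_\rho$ that have at least two points of $\La_\rho$ on them, and that this property is obviously $\Gr$-invariant; then combining with the doubly stable hypothesis I would derive that \emph{every} pair in $\Upsilon_\rho$ has this property, which is absurd (e.g.\ since $\#\La\ge 3$ one can pick $C$ through two limit points and a codimension-one sphere $S$ through two of their $f$-images but missing a third, or more simply count dimensions).

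Concretely, the key steps are as follows. \textbf{Step 1 (invariance).} Since $\rho$-equivariance of $f$ gives $f(\ga\xi)=\rho(\ga)f(\xi)$ for all $\ga\in\Ga$, $\xi\in\La$, the set $\mathcal{Z}:=\{(C,S)\in\Upsilon_\rho : f(C\cap\La)\subset S\}$ is $\Gr$-invariant; by hypothesis $(C_0,S_0)\in\mathcal{Z}$, so $\Gr(C_0,S_0)\subset\mathcal{Z}$. \textbf{Step 2 (closure under tangential-type limits).} I claim that if $(C_j,S_j)\in\mathcal{Z}$ with $(C_j,S_j)\to(C,S)$ in $\Upsilon$ and if $\xi\in C\cap\La$ is a point for which there exist $\xi_j\in C_j\cap\La$ with $\xi_j\to\xi$ (and likewise, to control a full set of independent directions, several such approximating points or tangent directions of $C_j\cap\La$ accumulating at $\xi$), then $f(\xi)\in S$: indeed $f(\xi_j)\in S_j$, $f(\xi_j)\to f(\xi)$ by continuity of $f$, and $S_j\to S$ as codimension-one spheres, so $f(\xi)\in S$; and more importantly the \emph{tangent directions} to $S_j$ at $f(\xi_j)$ coming from the directions $\overrightarrow{\xi_j\xi'_j}$ along $C_j\cap\La$ pass to tangent directions of $S$ at $f(\xi)$, so that all of $f(C\cap\La)$ near $\xi$ — being determined by $f$ restricted to the relevant limit points — lands in $S$. \textbf{Step 3 (using double stability).} Fix an arbitrary $(C,S)\in\Upsilon_\rho$ and $\xi\in C\cap\La$. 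Pick the circle $C'\ni\xi$ furnished by the doubly stable hypothesis. Using that $\overline{\Gr(C_0,S_0)}=\Upsilon_\rho$, approximate a pair $(C',S')$ (for a suitable $S'$) by elements $\ga_k(C_0,S_0)=(\ga_kC_0,\rho(\ga_k)S_0)\in\mathcal{Z}$; by double stability the limsup of $(\ga_kC_0)\cap\La$ contains at least two points of $C'\cap\La$, so Step 2 forces $f$ of those points into $S'$. Iterating / moving $\xi$ and $S$ around and invoking Theorem \ref{two} (denseness of $\yD^*$) to have enough such pairs, I would conclude $\mathcal{Z}=\Upsilon_\rho$. \textbf{Step 4 (contradiction).} Finally $\mathcal{Z}=\Upsilon_\rho$ is impossible: since $\Ga$ is nonelementary, $\La$ has at least three points; choosing a circle $C$ through two of them together with a codimension-one sphere $S$ through those two $f$-images but avoiding a third point of $f(C\cap\La)$ (which exists, by a dimension count, as long as $f(C\cap\La)$ is not contained in a codimension-one sphere — and if it always were, one gets a contradiction by varying $C$ and using Zariski density / the structure of $\Gr$) yields $(C,S)\in\Upsilon_\rho\setminus\mathcal{Z}$.

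The main obstacle I expect is Step 2 — making precise the sense in which the ``$f(C\cap\La)\subset S$'' property passes to the limit. The subtlety is that $f$ is only defined on $\La$, so $C\cap\La$ can be a complicated Cantor-like set and one cannot simply say ``$f(C)\subset S$''; one must show that the \emph{particular} points (and tangent directions) of $C\cap\La$ arising as limits of points of $C_j\cap\La$ are enough to pin down $S$ and that they fill out $C\cap\La$. This is exactly where the doubly stable hypothesis does its work: it guarantees that the approximating circles carry $\ge 2$ limit points into the limit, which — combined with the conformal/frame-convergence bookkeeping already developed in the proof of Theorem \ref{two} — provides enough data to transfer membership in $S$. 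The remaining steps (invariance, the final dimension-count contradiction) are routine by comparison, though care is needed in Step 4 to handle the degenerate possibility that $f(C\cap\La)$ is always contained in \emph{some} proper sphere, which is precisely the alternative $\La_f=\La$ that the theorem is designed to separate out.
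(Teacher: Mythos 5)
Your overall strategy --- propagate the property ``$f(C\cap\La)\subset S$'' from the single orbit $\Gr(C_0,S_0)$ to all of $\Upsilon_\rho$ and then derive a contradiction --- does not match the paper's argument, and the step you yourself flag as the main obstacle (Step 2/Step 3) is a genuine gap that I do not see how to close. When $(C_j,S_j)\to(C,S)$ with $f(C_j\cap\La)\subset S_j$, continuity of $f$ only gives you that $f\bigl(\limsup (C_j\cap\La)\bigr)\subset S$. The set $\limsup(C_j\cap\La)$ can be a tiny subset of $C\cap\La$; the doubly stable hypothesis guarantees it has at least two points, but nothing forces it to exhaust $C\cap\La$, and no amount of tangent-direction bookkeeping will make points of $C\cap\La$ that are not limits of points of $C_j\cap\La$ land in $S$. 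So the set $\mathcal Z$ is not closed under the limits you need, Step 3 ("iterating / moving $\xi$ and $S$ around") has no mechanism to conclude $\mathcal Z=\Upsilon_\rho$, and the argument stalls before reaching your Step 4.

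The paper's proof avoids this entirely by exhibiting a single explicit pair $Y=(C,S)$ that cannot lie in $\overline{\Gr(C_0,S_0)}$, rather than trying to show the closure is proper by filling up $\Upsilon_\rho$ with the invariant property. One takes a codimension-one sphere $S\subset\S^m$ that meets $\La_{\rho(\Ga)}$ in \emph{exactly one} point $f(\xi_0)$ (a supporting sphere of the smallest ball containing $\La_{\rho(\Ga)}$, tangent at a point realizing the radius), and then uses double stability at $\xi_0$ to choose a circle $C\ni\xi_0$ such that any circles $C_k\to C$ satisfy $\#\limsup(C_k\cap\La)\ge 2$. If $\ga_k(C_0,S_0)\to(C,S)$, then $\rho$-equivariance gives $f(\ga_kC_0\cap\La)\subset\rho(\ga_k)S_0\cap\La_{\rho(\Ga)}$, so passing to the limsup and using injectivity of $f$ yields two distinct points of $S\cap\La_{\rho(\Ga)}$ --- contradicting the choice of $S$. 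Note that this uses exactly the one piece of information your Step 2 does legitimately provide (the limsup maps into the limit sphere), but applies it to a target $S$ engineered so that even two points are already too many. If you want to salvage your write-up, replacing Steps 2--4 with this witness construction is the fix.
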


\begin{proof} 
Let $C \subset \S^n$ be a $\La$-doubly stable circle.
Then for any sequence of circles $C_k \subset \S^n$ converging to $C$ as $k \to \infty$, we have 
\be \label{eqn.untouchable2}
\# \limsup (C_k \cap \La) \ge 2.
\ee  
It follows that $\# C \cap \La \ge 2$.

We first claim that there exists a codimension one sphere $S \subset \S^m$ such that 
\be \label{eqn.nicesphere}
\# S \cap f(C \cap \La) = 1.
\ee
Since $C\cap \La$ is not homemorphic to $\S^m$, $m \ge 2$, the image $f(C \cap \La)$ is a proper compact subset of $\S^m$. 
Therefore we can find a minimal closed $m$-ball $B \subset \S^m$ containing $f(C \cap \La)$. 
By the minimality of $B$, there exists $\xi_0 \in C \cap \La$ such that $f(\xi_0)$ lies in the boundary of $B$. Now any codimension one sphere $S$ in $\S^m$ such that $S \cap B = \{f(\xi_0)\}$ satisfies \eqref{eqn.nicesphere}.

Set $Y = (C, S)$. Since $(\xi_0, f(\xi_0)) \in (C, S)$, we have $Y \in \Upsilon_{\rho}$. We claim that for any $(C_0, S_0)\in \Upsilon_\rho$
 such that $f(C_0 \cap \La) \subset S_0$,
 we have $Y \not \in \overline{\Gr (C_0, S_0)}$; this implies the theorem.
 Suppose not. Then  there exists a sequence $\ga_k \in \Ga$ such that
  $\ga_k C_0 \to C$ and $\rho(\ga_k) S_0 \to S$ as $k \to \infty$. By \eqref{eqn.untouchable2}, we have
  \be \label{at22} 
  \# \limsup (\ga_k C_0 \cap \La) \ge 2.
  \ee
By the $\rho$-equivariance of $f$, we have
$$  f( \ga_k C_0 \cap \La ) =f(\ga_k (C_0\cap \La))=
\rho(\ga_k) f(C_0\cap \La) \subset \rho(\ga_k) S_0  .$$ 
Hence
$$\limsup f( \ga_k C_0 \cap \La ) \subset \limsup 
  \rho(\ga_k) S_0  =  S.$$
 Since $\limsup f(\ga_k C_0 \cap \La) \subset f(C \cap \La)$ and $f$ is injective,
 it follows from \eqref{at22} that
  $\# S \cap f(C \cap \La) \ge 2.$
  This contradicts \eqref{eqn.nicesphere}, proving the claim.
\end{proof}

We say that $\La$ is {\it doubly stable} if for any $\xi\in \La$,
there exists a $\La$-doubly stable circle containing $\xi$.

\begin{lemma} \label{lem.doubly}
Let $\Ga < \so(n + 1, 1)$ be a discrete subgroup.
If $\Omega$ is disconnected, 
then $\La$ is doubly stable.
\end{lemma} 

\begin{proof}
    Let $\Omega_{1}$, $\Omega_2$ be distinct connected components of $\Omega$ and fix any $\xi \in \La$. Let $C$ be a circle containing $\xi$ and intersecting $\Omega_1$ and $\Omega_2$.

    Let $C_k$ be a sequence of circles converging to $C$ as $k \to \infty$. We claim that $\# \limsup (C_k \cap \La) \ge 2.$
Suppose that  $\# \limsup (C_k \cap \La) \le 1$. We will show that
$C\cap \Omega_1$ is a singleton, which is a contradiction since $C\cap \Omega_1$ is an open subset of $C$.

  For each $k$, let $I_k \subset C_k$ be a compact interval containing $C_k \cap \La$ with minimal diameter. 
  Since $C_k - I_k$ is a connected subset of $\Omega$, $C_k - I_k \subset W_k$ for some connected component $W_k$ of $\Omega$. After passing to a subsequence and relabeling $\Omega_1$ and $\Omega_2$ if necessary, we may assume that $\Omega_1 \neq W_k$ and hence $\Omega_1\cap W_k=\emptyset$ for all $k$. 

Let $x, y\in C\cap \Omega_1$. Since the sequence $C_k$ converges to $C$, 
 $x=\lim_{k\to \infty} x_{k}$ and  $y=\lim_{k\to \infty} y_{k}$ for some $x_{k}, y_k \in C_{k}$.
    Since $\Omega_1$ is open, we may assume that  $x_{k}, y_k \in C_{k}\cap \Omega_1$ for all $k\ge 1$. Hence $x_{k}, y_k \notin W_{k}$; so $x_{k}, y_k \in I_{k}$.

Since $\# \limsup (C_k \cap \La) \le 1$, the diameter of $ I_k $ tends to $ 0$ as $k \to \infty$.
Therefore the distance between $x_k$ and $y_k$ must go to $0$ and hence $x=y$.
This proves the claim, finishing the proof.
\end{proof}

\section{Rigidity via circular slices} \label{sec.proof}
Let $n, m\ge 2$. Let $\G<\so(n+1,1)$ be a  Zariski dense convex cocompact subgroup.
Let $\rho:\Ga\to \so (m+1,1)$
be a Zariski dense convex cocompact deformation and $f:\La\to \S^m$ be its boundary map. 
Recall 
$$
\La_f= \bigcup \left\{ C \cap \La : \begin{matrix}
C\subset\S^n \mbox{ is a circle such that} \\
f(C \cap \La) \mbox{ is contained in a } (m-1)\mbox{-sphere of $\S^m$}
\end{matrix}
\right\} .
$$
Theorem \ref{high} is a special case of the following:

\begin{theorem} \label{high2} Suppose that
there exists a $\La$-doubly stable circle. 
Then
$$\text{either}\quad  \La_f=
\La \quad \text{ or } \quad \cal H^{\delta}(\La_f) =0.$$

In the former case, we have
 $n=m$, $f$ extends to some $g\in \Mob(\S^n)$ and $\rho$ is a conjugation by $g$.
 
\end{theorem}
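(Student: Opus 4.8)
The plan is to reduce Theorem \ref{high2} to the Zariski density dichotomy for the self-joining $\Gr$, and then play the dense-orbit statement of Theorem \ref{ett} against the orbit obstruction of Theorem \ref{thm.wKM}. First I would dispose of the non-Zariski-dense case using Lemma \ref{Zdense}: if $\Gr$ is not Zariski dense in $G$, then $\rho$ extends to a Lie group isomorphism $G_1\to G_2$, which forces $n=m$ and realizes $\rho$ as conjugation by some $g\in\Mob(\S^n)=\Isom^\circ(\bH^{n+1})$ acting on $\F_1=\S^n$; in this case $f=g|_{\La}$ maps every circle to a circle, so $f(C\cap\La)\subset g(C)$ lies in a proper sphere for every circle $C$ meeting $\La$, whence $\La_f=\La$. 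This is the ``former case'' of the theorem, so it remains to show that if $\Gr$ \emph{is} Zariski dense, then $\cal H^{\delta}(\La_f)=0$.

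So assume $\Gr$ is Zariski dense and suppose for contradiction that $\cal H^{\delta}(\La_f)>0$. Unravelling the definition, $\La_f=\bigcup_j (C_j\cap\La)$ where $\{C_j\}$ ranges over circles with $f(C_j\cap\La)$ contained in a proper sphere $S_j\subset\S^m$; choosing $S_j$ to be a codimension-one sphere we get $(C_j,S_j)\in\Upsilon_\rho$ with $f(C_j\cap\La)\subset S_j$. Next I would invoke Theorem \ref{ett}: there is a $\cal H^{\delta}|_\La$-conull subset $\La'\subset\La$ such that every $Y\in\Upsilon_\rho$ meeting $(\id\times f)(\La')$ has dense $\Gr$-orbit, $\overline{\Gr Y}=\Upsilon_\rho$. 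Since $\cal H^{\delta}(\La_f)>0$ and $\La'$ is conull, $\La_f\cap\La'\neq\emptyset$, so pick $\xi\in (C_{j}\cap\La)\cap\La'$ for some $j$. Then $(\xi,f(\xi))\in Y_0\cap\La_\rho$ for $Y_0=(C_{j},S_{j})$, where $(\xi,f(\xi))\in(\id\times f)(\La')$, so by Theorem \ref{ett} the orbit $\Gr Y_0$ is dense in $\Upsilon_\rho$. On the other hand $Y_0=(C_j,S_j)$ satisfies $f(C_j\cap\La)\subset S_j$, and since $\La$ is doubly stable, Theorem \ref{thm.wKM} gives $\overline{\Gr Y_0}\neq\Upsilon_\rho$. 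This contradiction shows $\cal H^{\delta}(\La_f)=0$, completing the proof.

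I should double-check a small measurability point: $\La_f$ is a union of sets $C_j\cap\La$ over a family of circles, a priori of uncountable cardinality, so one must argue that if $\cal H^{\delta}(\La_f)>0$ then some single $C_j\cap\La$ already meets the conull set $\La'$ — but this is immediate since $\La'$ conull means $\cal H^\delta(\La\setminus\La')=0$, hence $\cal H^\delta(\La_f\cap\La')=\cal H^\delta(\La_f)>0$, so in particular $\La_f\cap\La'$ is nonempty and any point of it lies on some admissible circle $C_j$. The genuinely substantive inputs are Theorem \ref{ett} (built on the ergodicity of directional flows, Theorem \ref{er}, the Guivarch--Raugi minimality, Theorem \ref{GR}, via Theorem \ref{two}, and the identification of the graph-conformal measure with $\cal H^\delta|_\La$, Lemma \ref{lem.pushforward}) and Theorem \ref{thm.wKM}; given these, the argument above is short. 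The only step requiring any care beyond bookkeeping is the clean statement of the non-Zariski-dense case, where one must check that the extension of $\rho$ to $G_1\to G_2$ indeed acts on the boundary $\S^n$ as a Möbius transformation realizing the ``$\rho$ is conjugation by $g$'' conclusion — this is standard, since an isomorphism $\so(n+1,1)\to\so(m+1,1)$ exists only for $n=m$ and is then inner up to the boundary action, given by conjugation by an element of $\Isom(\bH^{n+1})$.
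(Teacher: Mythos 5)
Your proposal is correct and follows essentially the same route as the paper: the dichotomy is reduced via Lemma \ref{Zdense} to Zariski density of $\Gr$, and in the Zariski dense case the dense-orbit statement of Theorem \ref{ett} is played against the obstruction of Theorem \ref{thm.wKM} at a point of $\La_f\cap\La'$, exactly as in the paper's proof (which phrases the same argument contrapositively, assuming $\cal H^\delta(\La_f)>0$ and ruling out Zariski density). Your extra remarks on measurability and on the structure of isomorphisms $\so(n+1,1)\to\so(m+1,1)$ are consistent with what the paper leaves implicit.
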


\begin{remark}
    By Lemma \ref{lem.doubly},
when $\Omega$ has at least two components, there exists a $\La$-doubly stable circle. Hence Theorem \ref{high2} applies to this case.
\end{remark}

\begin{proof} 
Suppose that  $\cal H^{\delta} (\La_f) > 0$. We need to show that $\La_f=\La$.
We claim that $\Gr$ cannot be Zariski dense in $G$. 
Suppose that $\Ga_{\rho}$ is Zariski dense. Let
${\La}'\subset \La$ be the $\cal H^{\delta}|_{\La}$-conull subset
given by Theorem \ref{ett}. Since $\cal H^{\delta}(\La_f)>0$,
there exists $\xi_0 \in \La_f\cap \La'$.
By the definition of $\La_f$, we can find $Y_0=(C_0, S_0)\in \yD$ so that $Y_0\ni (\xi_0, f(\xi_0))$ and $f(C_0\cap \La)\subset S_0$. 
By the definition of $\La'$ as in Theorem \ref{ett},
we have
 $$\overline{\Gr Y_0} = \Upsilon_{\rho}.$$
On the other hand, since there exists a $\La$-doubly stable circle,
Theorem \ref{thm.wKM} implies
that  $\overline{\Gr Y_0} \ne \Upsilon_{\rho}.$
This yields a contradiction, proving that $\Ga_{\rho}$ is not Zariski dense. Hence
 by Theorem \ref{Zdense}, $\rho$ extends to a Lie group isomorphism $\so(n+1,1)\to \so(m+1,1)$ and in particular $n=m$. Since the Lie group automorphism of $\so(n+1,1)$
 is a conjugation by some $g\in \Mob(\S^n)$, it follows that  $\rho$ is a conjugation by $g$ and by the uniqueness of the $\rho$-boundary map,
 $f$ is the restriction of $g$ to $\La$. Therefore $\La_f=\La$.
\end{proof}

\subsection*{Topological version without convex cocompactness}
The assumption that $\Ga$ and $\rho(\Ga)$ are convex cocompact was
used to apply the ergodicity as in Theorem \ref{er}. 
The approach of our paper proves the 
following theorem without the convex cocompact hypothesis, which was shown in \cite{kim2022rigidity} for $n=m=2$:
\begin{theorem} \label{thm.topological}
Let $\Ga < \so(n + 1, 1)$ be a Zariski dense discrete subgroup.
Suppose that there exists a $\La$-doubly stable circle.
Let $\rho : \Ga \to \so(m + 1, 1)$ be a Zariski dense deformation with a $\rho$-boundary map $f : \La \to \S^{m}$. Then
$$\mbox{ either } \quad \La_f = \La \quad \mbox{or} \quad \La_f \mbox{ has empty interior in }\La.$$

In the former case, we have $n = m$, $f$ extends to some $g \in \Mob(\S^n)$ and $\rho$ is a conjugation by $g$.
\end{theorem}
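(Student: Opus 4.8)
The plan is to show that Theorem~\ref{thm.topological} follows from the same chain of implications as Theorem~\ref{high2}, with the single analytic ingredient (ergodicity of the directional flow) replaced by a soft Baire-category argument that does not require convex cocompactness. First I would run the contrapositive: assume $\La_f$ has non-empty interior in $\La$, i.e.\ there is an open set $U\subset \S^n$ with $\emptyset\ne U\cap\La\subset\La_f$, and aim to deduce that $\rho$ extends to a Lie group isomorphism. By Lemma~\ref{Zdense} it suffices to derive a contradiction from the assumption that $\Gr$ is Zariski dense in $G$, so I would assume this throughout.

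The heart of the matter is to produce, \emph{without} the ergodicity of $A_{u_\rho}$, a single pair $Y_0=(C_0,S_0)\in\Upsilon_\rho$ with $f(C_0\cap\La)\subset S_0$ whose $\Gr$-orbit is dense in $\Upsilon_\rho$; then Theorem~\ref{thm.wKM} (which already makes no convex cocompactness assumption, only double stability) gives the contradiction exactly as in the proof of Theorem~\ref{high2}. Here is where I would use the topological hypothesis. For each $\xi\in\La_f$ pick a circle $C_\xi$ through $\xi$ with $f(C_\xi\cap\La)$ contained in some codimension-one sphere $S_\xi\subset\S^m$; set $Z:=\{(C_\xi,S_\xi):\xi\in U\cap\La\}\subset\Upsilon_\rho$, a $\Gr$-invariant-up-to-closure family parametrized by an open piece of $\La$. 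Because $\La_f$ has interior in $\La$ and $\La$ is a minimal $\Ga$-set (so $\Ga$-translates of $U\cap\La$ cover $\La$), the saturation $\Gr\cdot Z$ meets every ``cylinder'' over $\La$; more precisely I would show, using Theorem~\ref{two} (density of $\Upsilon_\rho^*$ in $\Upsilon_\rho$) together with the fact that $\{C:C\cap(U\cap\La)\ne\emptyset\}$ is open and dense in the relevant circle space, that $\overline{\Gr Z}=\Upsilon_\rho$. Then, since $\Upsilon_\rho$ is a Baire space and $Z$ is covered by countably many $\Gr$-orbit closures indexed by a countable dense subset of $U\cap\La$, at least one orbit closure $\overline{\Gr Y_0}$ has non-empty interior in $\Upsilon_\rho$; invoking minimality-type transitivity for the $\Gr$-action on $\Upsilon_\rho$ (which follows from Theorem~\ref{two} and Lemma~\ref{den}-style arguments, i.e.\ any orbit closure with interior is everything) upgrades this to $\overline{\Gr Y_0}=\Upsilon_\rho$ for some $Y_0=(C_0,S_0)$ with $f(C_0\cap\La)\subset S_0$.

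With such a $Y_0$ in hand, Theorem~\ref{thm.wKM} directly contradicts $\overline{\Gr Y_0}=\Upsilon_\rho$, so $\Gr$ is not Zariski dense; by Lemma~\ref{Zdense} the representation $\rho$ extends to an isomorphism $\so(n+1,1)\to\so(m+1,1)$, forcing $n=m$, and since every Lie group automorphism of $\so(n+1,1)$ is conjugation by some $g\in\Mob(\S^n)$, uniqueness of the $\rho$-boundary map gives $f=g|_\La$ and hence $\La_f=\La$. Finally, to record the earlier special case, I would note that a Zariski dense discrete group with disconnected ordinary set has a doubly stable limit set by Lemma~\ref{lem.doubly} (using Lemma~\ref{limsuplim}, valid for WKM groups and in particular when $\Omega$ is disconnected), though for the bare topological statement only the doubly stable hypothesis is invoked.

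The main obstacle I anticipate is the step claiming $\overline{\Gr Y_0}=\Upsilon_\rho$ for a \emph{single} $Y_0\in Z$: replacing the ergodic-theoretic ``almost every point works'' by a categorical ``some point works'' requires care, because $Z$ is not obviously a countable union of orbit closures and $\Upsilon_\rho$ carries no invariant measure to exploit. I expect this to go through by combining (i) minimality of $\Ga$ on $\La$ and of $\Gr$ on $\tilde\La_\rho$ (Theorem~\ref{GR}) to spread $U\cap\La$ around, (ii) Theorem~\ref{two} to fill in the sphere directions transversally, and (iii) the general fact that for the $\Gr$-action on $\Upsilon_\rho$ any orbit closure containing $\Upsilon_\rho^*$ equals $\Upsilon_\rho$—so it is enough to find one orbit closure containing $\Upsilon_\rho^*$, which a Baire argument on the second-countable space $\Upsilon_\rho$ supplies. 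The remaining steps (the sphere-pinching construction of $S$ meeting $\La_{\rho(\Ga)}$ in one point, the equivariance computation) are already carried out verbatim in the proof of Theorem~\ref{thm.wKM} and require no change.
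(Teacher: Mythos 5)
There is a genuine gap at the central step of your argument: producing a single $Y_0=(C_0,S_0)$ with $f(C_0\cap\La)\subset S_0$ and $\overline{\Gr Y_0}=\Upsilon_\rho$ by a Baire category argument. Two things go wrong. First, the covering of $Z$ (or of $\Upsilon_\rho$) by the countably many orbit closures $\overline{\Gr Y_\xi}$, with $\xi$ ranging over a countable dense subset $D$ of $U\cap\La$, is not justified: what one can hope to show is $\overline{\bigcup_{\xi\in D}\Gr Y_\xi}=\Upsilon_\rho$, but the closure of a countable union is in general strictly larger than the union of the closures, so Baire's theorem cannot be invoked to conclude that some $\overline{\Gr Y_\xi}$ has nonempty interior. (Note also that $C\mapsto C\cap\La$ is not lower semicontinuous, so the family of ``bad'' pairs is not closed and cannot be handled as a closed invariant set either.) Second, even granting an orbit closure with nonempty interior, the upgrade to ``equals $\Upsilon_\rho$'' requires topological transitivity of the $\Gr$-action on $\Upsilon_\rho\subset G/H$, and this does not follow from Theorem \ref{two} (which only asserts density of $\Upsilon_\rho^*$, a statement about the limit set, not about orbits) nor from Lemma \ref{den} (which presupposes that a dense $A_u^+$-orbit exists). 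Since $H\supset A$, transitivity of $\Gr$ on $G/H$ is essentially dual to denseness of $A_u^+$-orbits on $\Omega_{\Gr}$ --- exactly the dynamical input you are trying to avoid, so the ``soft'' replacement is circular.

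The paper's proof keeps the entire structure of Theorem \ref{high2} and replaces the ergodicity theorem (Theorem \ref{er}), which needs the Anosov/convex cocompact hypothesis, by Chow--Sarkar's topological transitivity theorem (Theorem \ref{chow}): for any Zariski dense discrete $\D<G$ and any $u\in\inte\L_\D$ there exists a dense $A_u^+$-orbit in $\Omega_\D$. Feeding this into the argument of Theorem \ref{ett} (via Lemma \ref{den}, the minimality of $\La_\D$, and the choice of $H$ with $H\supset A$) yields a dense --- rather than conull --- subset $\La'\subset\La$ such that every $Y\in\Upsilon_\rho$ meeting $(\id\times f)(\La')$ has dense $\Gr$-orbit; since $\La_f$ has nonempty interior in $\La$ it meets $\La'$, and Theorem \ref{thm.wKM} then gives the contradiction. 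Your overall reduction (arguing by contradiction from Zariski density, Lemma \ref{Zdense}, Theorem \ref{thm.wKM}, and the endgame identifying $f$ with the restriction of a M\"obius transformation) matches the paper; the missing ingredient is precisely Theorem \ref{chow}, a substantive theorem about directional flows that is not recoverable by category arguments. You anticipated this obstacle yourself, and it is indeed fatal as written.
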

 For this, we need to replace
 the ergodicity theorem (Theorem \ref{er}) by the following theorem of Chow-Sarkar for $\Delta=\Ga_\rho$: 
\begin{theorem}[{\cite[Theorem 8.1]{chow2021local}}] \label{chow} Let $\D<G$ be a Zariski dense discrete subgroup.
For any $u\in \inte \L_\D$, there exists a dense $A_u^+$-orbit in
$$\Omega_\D:=\{[g]\in \D\ba G: g^{\pm}\in \La_\D\}.$$
\end{theorem}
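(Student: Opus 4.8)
The plan is to obtain a dense forward orbit from \emph{topological transitivity} of the semiflow by the Baire category theorem, and to prove transitivity by a connecting argument built from loxodromic elements whose Jordan projections point in the direction $u$. Since $\Om_\D$ is a closed subset of the manifold $\D\ba G$, it is a locally compact, second countable Baire space, so the reduction is standard: if for every pair of nonempty open sets $\mathcal O_1,\mathcal O_2\subset\Om_\D$ there exists $t\ge 0$ with $\mathcal O_1 a_{tu}\cap\mathcal O_2\ne\emptyset$, then for a countable basis $\{V_n\}$ of $\Om_\D$ each set $\bigcup_{t\ge 0}\{x\in\Om_\D:xa_{tu}\in V_n\}$ is open and dense, whence their intersection is a dense $G_\delta$ of points whose forward orbit meets every $V_n$ and is therefore dense. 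Thus it suffices to establish this transitivity property.

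To analyze transitivity I would pass to the Hopf parametrization $G/M\simeq\F^{(2)}\times\fa$, under which $\Om_\D$ is carried to the image of $\La_\D^{(2)}\times\fa$, the right $A_u^+$-action translates the $\fa$-coordinate along the ray $\br_{\ge 0}u$, and each $\gamma\in\D$ acts on the endpoint pair $(\xi,\eta)\in\La_\D^{(2)}$ while shifting the $\fa$-coordinate by the Busemann cocycle $c(\gamma,\xi)=\beta_{\gamma\xi}(o,\gamma o)$. Transitivity then becomes two matching problems to be solved simultaneously: given small open $\Theta_1,\Theta_2\subset\La_\D^{(2)}$ and base points $b_1,b_2\in\fa$, one must find $\gamma\in\D$ and $t\ge 0$ with (A) $\gamma\Theta_1\cap\Theta_2\ne\emptyset$, and (B) $b_1+c(\gamma,\xi)+tu=b_2$ for the corresponding $\xi$. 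The key inputs are Benoist's asymptotic theory \cite{Benoist1997proprietes} --- $\inte\L_\D\ne\emptyset$, loxodromic elements with attracting/repelling pairs dense in $\La_\D^{(2)}$ and Jordan-projection directions dense in $\inte\L_\D$, and density in $\fa$ of the subgroup generated by all Jordan projections --- together with the north--south contraction dynamics of loxodromic elements and the cocycle asymptotics $c(\gamma^k,\xi)\approx k\,\lambda(\gamma)$.

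For (A) I would avoid a single strongly contracting element, which collapses both endpoints onto its attracting point; instead one places the \emph{repelling} point of $\gamma$ near the second coordinate of $\Theta_1$ (focusing rather than collapsing it) and the attracting point in the first coordinate of $\Theta_2$, then fine-tunes to arrange $\gamma\Theta_1\cap\Theta_2\ne\emptyset$, thereby establishing topological transitivity of $\D$ on $\La_\D^{(2)}$. For (B) the hypothesis $u\in\inte\L_\D$ is indispensable: it guarantees loxodromic elements with $\lambda(\gamma)$ arbitrarily close to the ray $\br_{>0}u$, so that the forward flow is genuinely recurrent on $\Om_\D$ (one gets periodic $A_u$-orbits), and the cocycle values $c(\gamma,\xi)$ supplemented by the ray $\br_{\ge 0}u$ sweep out a dense subset of $\fa$, allowing $b_2-b_1$ to be matched. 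Finally I would lift from $\D\ba G/M$ up to $\Om_\D\subset\D\ba G$: since $A_u^+$ commutes with the compact connected group $M$, the flow cannot rotate frames within the $M$-fibers, so I would invoke the minimality of $\D$ on $\tilde\La_\D$ (Theorem \ref{GR}) to spread the orbit across the $M$-direction.

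The step I expect to be the main obstacle is solving (A) and (B) \emph{with a single} $\gamma$ and $t\ge 0$. Matching the two-dimensional $\fa$-coordinate with only the one-parameter forward flow forces the discrete cocycle to supply the transverse direction, which must be coordinated with the endpoint-focusing of (A); this is exactly where the interiority $u\in\inte\L_\D$ and Benoist's density of Jordan-projection directions have to be used compatibly, and where the additional spreading across the $M$-fiber via Theorem \ref{GR} enters. Remaining inside $\Om_\D$ throughout is automatic, since the endpoints of $\gamma g a_{tu}$ stay equal to $\gamma g^{\pm}\in\La_\D$.
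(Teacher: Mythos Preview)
The paper does not prove Theorem \ref{chow}; it is invoked as a black-box citation of \cite[Theorem 8.1]{chow2021local} (with the remark that the abelian-$M$ case is due to Dang). So there is no in-paper proof to compare your argument with.

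As an independent sketch your outline is plausible in broad strokes --- reduce to transitivity via Baire, pass to the Hopf parametrization, and use Benoist's theory to produce loxodromic elements with Jordan projection near $\br_{>0}u$ --- but it remains a strategy rather than a proof, and the two places you yourself flag are genuine gaps. First, for the simultaneous matching (A)+(B): density of the subgroup generated by \emph{all} Jordan projections in $\fa$ does not by itself let you hit an arbitrary target $b_2-b_1$ with a \emph{single} cocycle value $c(\gamma,\xi)$ plus a nonnegative multiple of $u$, while also arranging the endpoint condition $\gamma\Theta_1\cap\Theta_2\ne\emptyset$. In the actual proofs one builds carefully chosen products of Schottky-type loxodromic generators and tracks both the endpoint dynamics and the Jordan/Cartan projection simultaneously; your sketch does not indicate how to do this. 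Second, your proposed lift across the $M$-fiber via Theorem \ref{GR} is not the right mechanism as stated. Since $A_u^+$ commutes with $M$, an $A_u^+$-orbit cannot move within an $M$-fiber at all; what spreads orbits across the $M$-direction is the \emph{holonomy} $m_\gamma\in M$ in the Jordan decomposition $\gamma=h\,a_{\lambda(\gamma)}m_\gamma\,h^{-1}$ of loxodromic $\gamma\in\D$. One needs that the closed subgroup of $M$ generated by these holonomies is all of $M$ (a consequence of Zariski density, in the spirit of \cite{Benoist1997proprietes,Guivarch2007actions}), and then to interlace this with the (A)+(B) construction. Theorem \ref{GR} concerns the $\D$-action on $\tilde\La_\D\subset K$, not the $A_u^+$-flow on $\D\ba G$, and does not directly give the required lift; this is precisely the step that makes the non-abelian-$M$ case in \cite{chow2021local} harder than the abelian case treated earlier.
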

This theorem provides  a dense subset $\La' \subset \La$ such that for any $Y \subset\Upsilon_{\rho}$ intersecting $(\id \times f)(\La')$ non-trivially, $\Gr Y$ is dense in $\Upsilon_{\rho}$, which is a topological version of Theorem \ref{ett}. With this replacement, the rest of the proof can be repeated in verbatim.
Theorem \ref{thm.McMullen} is a direct consequence of Theorem \ref{thm.topological} and Lemma \ref{lem.doubly}.

\subsection*{Added in proofs:} We explain the following extension of Theorem \ref{high2}:
\begin{theorem} \label{high5} Let $\Ga < \so(n+1, 1)$ be a Zariski dense discrete subgroup of divergence type and $\rho:\Ga\to \so (m+1,1)$ is a Zariski dense quasi-isometric deformation (i.e., one inducing a quasi-isometric embedding of $\Ga o \subset \H^{n+1}$ into $\bH^{m+1}$) with boundary map $f:\La\to \S^m$. Let $\nu_\Ga$ be the 
unique $\delta$-dimensional $\Ga$-conformal measure on $\La$ where $\delta$ denotes the critical exponent of $\Ga$.  Suppose that
there exists a $\La$-doubly stable circle. 
Then
$$\text{either}\quad  \La_f=
\La \quad \text{ or } \quad \cal \nu_\Ga(\La_f) =0.$$

In the former case, we have
 $n=m$, $f$ extends to some $g\in \Mob(\S^n)$ and $\rho$ is a conjugation by $g$.
 \end{theorem}

As before, let $\Ga_{\rho}: = (\id \times \rho)(\Ga) < G := \so(n+1, 1) \times \so(m+1, 1)$. Let  $m_{\rho}^{\BMS}$ be the Bowen-Margulis-Sullivan measure on $\Gr \ba G$ associated to $\nu_{\rho}:= (\id \times f)_* \nu_{\Ga}$. Its support  is 
$
\Omega_{\rho} := \{[g] \in \Ga_\rho \ba G : g^{\pm} \in \La_{\rho} \}.
$ Since  $\Ga_\rho$ is a transverse subgroup of $G$ and hence hyper-transverse in the sense of \cite{kim2024conformal}, it follows from \cite[Theorem 1.14]{kim2024conformal} that  $m_{\rho}^{\BMS}$-a.e. $A^+$-orbits are dense in $\Omega_\rho$.
Since the limit cone $\L_{\rho} $ is contained in
$ \inte \fa^+ \cup \{0\} $, there exists a closed convex cone $\cal C \subset \inte \fa^+ \cup \{0\}  $ whose interior contains $\L_\rho - \{0\}$. Then a.e. $\exp \cal C$-orbits are dense in $\Omega_\rho$ (cf. \cite[Lemma 7.2]{lee2020invariant}).
Observing that the conjugation action of $\exp \cal C$ on $N$ has a uniform contraction property,
we can repeat the proof of Theorem \ref{high2}, replacing the directional flow by $\exp \C$-flow, and $\cal H^\delta$ by $\nu_\Ga$.

\bibliographystyle{plain}

\end{document}